\documentclass[reqno,centertags,11pt]{amsart}

\usepackage{amssymb,amsmath,amsfonts,amssymb}
\usepackage{hyperref}
\usepackage{enumerate}

\textheight 22cm \topmargin -0cm \leftmargin 0cm \marginparwidth 0mm
\textwidth 17cm \hsize \textwidth \advance \hsize by
-\marginparwidth \oddsidemargin -9mm \evensidemargin \oddsidemargin
\usepackage{latexsym}
\advance\hoffset by 5mm

\def\@abssec#1{\vspace{.05in}\footnotesize \parindent .2in
{\bf #1. }\ignorespaces}

\newtheorem{theorem}{Theorem}[section]

\newtheorem{lemma}[theorem]{Lemma}

\newtheorem{remark}[theorem]{Remark}

\allowdisplaybreaks \numberwithin{equation}{section}

\begin{document}

\title[Discretely self-similar solutions for Euler equations]{Discretely self-similar singular solutions for the incompressible Euler equations}
\author{Liutang Xue}
\address{School of Mathematical Sciences, Beijing Normal University and Laboratory of Mathematics and Complex Systems, Ministry of Education, Beijing 100875, P.R. China}
\email{xuelt@bnu.edu.cn}
\subjclass[2010]{76B03, 35Q31, 35Q35}
\keywords{Discretely self-similar singularity, Euler equations, Nonexistence criteria}
\date{}
\maketitle

\begin{abstract}
  In this article we consider the discretely self-similar singular solutions of the Euler equations,
and the possible velocity profiles concerned not only have decaying spatial asymptotics, but also have unconventional non-decaying asymptotics.
By relying on the local energy inequality of the velocity profiles and the bootstrapping method,
we prove some nonexistence results and show the energy behavior of the possible nontrivial velocity profiles.
For the case with non-decaying asymptotics, the needed representation formula of the pressure profile in terms of velocity profiles is also given and justified.

\end{abstract}

\section{Introduction}
In this paper we consider the Cauchy problem of the $N$-dimensional ($N\geq 3$) incompressible Euler equations
\begin{equation}\label{eq:euler}
\begin{cases}
  \partial_t v + v\cdot \nabla v + \nabla p = 0, &\quad \textrm{for\;\;} (x,t)\in \mathbb{R}^N\times\mathbb{R},\\
  \mathrm{div}\, v =0, &\quad \textrm{for\;\;} (x,t)\in \mathbb{R}^N\times\mathbb{R},\\
  v|_{t=0}=v_0, & \quad \textrm{for\;\;} x\in \mathbb{R}^N,
\end{cases}
\end{equation}
where $v=(v_1,\cdots,v_N)$ is the vector-valued velocity field and $p$ is the scalar-valued pressure function. The Euler equations \eqref{eq:euler}
describe the motion of the perfect incompressible inviscid fluids and is the fundamental system in the fluid mechanics.

For the smooth data, e.g. $v_0\in H^k(\mathbb{R}^N)$, $k>N/2+2$, it is well-known that there exists a $T>0$ such that
$v\in C(]-T,T[,H^k(\mathbb{R}^N))\cap C^1(]-T,T[; H^{k-1}(\mathbb{R}^N))$
and the pressure satisfies that $-\Delta p= \mathrm{div}\,\mathrm{div} (v\otimes v)$. Up to a function depending only on $t$, the pressure can be given by
\begin{equation}\label{eq:pexp}
  p(x,t)= -\frac{1}{N} |v(x,t)|^2 + \mathrm{p.v.} \int_{\mathbb{R}^N} K_{ij}(x-y) v_i(y,t)v_j(y,t)\,\mathrm{d}y,
\end{equation}
where
\begin{equation}
K_{ij}(y)= \frac{1}{N|\mathbb{S}^{N-1}|}\frac{Ny_i y_j- |y|^2 \delta_{ij}}{|y|^{N+2}}, \quad\textrm{for}\;\; i,j=1,2,\cdots,N
\end{equation}
is the Calder\'on-Zygmund kernel. So far it remains to be an outstanding open problem whether or not we can extend $T$ above to $\infty$ for the smooth solutions of Euler equations.

We here specially focus on the finite-time singularity of self-similar type for the Euler equations.
Such type of singularity is related to the basic property that the equations \eqref{eq:euler} are invariant under the scaling transformation
\begin{equation}
\begin{split}
  & v(x,t)\mapsto v_{\lambda,\alpha}(x,t)=\lambda^\alpha v(\lambda x, \lambda^{1+\alpha} t),\quad \lambda>0,\; \\
  & p(x,t)\mapsto p_{\lambda,\alpha}(x,t)=\lambda^{2\alpha} p(\lambda x, \lambda^{1+\alpha} t).
\end{split}
\end{equation}
In practice, we also combine the spacetime translation in \eqref{eq:euler} to show the exact formula. We call a solution $(v,p)$ of \eqref{eq:euler}
is \textit{(backward) self-similar} with respect to the origin $0$ and time $T$ on the spacetime domain $D:=\mathbb{R}^N\times ]-\infty,T[$
if there exist some $\alpha>-1$ and $T>0$ such that for all $(x,t)\in D$,
\begin{equation}\label{eq:SS}
\begin{split}
  v(x,t)=\lambda(t)^\alpha V\big( \lambda(t)x \big),\quad
  p(x,t)=\lambda(t)^{2\alpha} P\big(\lambda(t) x \big),
\end{split}
\end{equation}
where $\lambda(t)=(T-t)^{-\frac{1}{1+\alpha}}>0$, $(V,P)$ are stationary functions. The assumption $\alpha>-1$ guarantees that the singular solution concentrates on the origin as $t\rightarrow T$.
Up to a spacetime translation, \eqref{eq:SS} corresponds to that for some $\alpha>-1$,
\begin{equation}\label{eq:scal1}
  v(x,t)=v_{\lambda,\alpha}(x,t),\;\;\;p(x,t)= p_{\lambda,\alpha}(x,t),\quad \forall \,\lambda>0, (x,t)\in D.
\end{equation}
A more general case is that the equality \eqref{eq:scal1} holds only for one single $\lambda>1$, and correspondingly we call a solution $(v,p)$ of \eqref{eq:euler}
is \textit{discretely self-similar with a factor $\lambda>1$} with respect to the origin $0$ and time $T$
on the spacetime domain $D:=\mathbb{R}^N\times ]-\infty,T[$ if there exist some $\alpha>-1$ and $T>0$ such that for all $(x,t)\in D$,
\begin{equation}\label{eq:condDSS}
  \mathcal{T}v(x,t)=\mathcal{T}v_{\lambda,\alpha}(x,t),\quad \textrm{for}\;\lambda>1,
\end{equation}
that is,
\begin{equation}
  v(x,T-t)= \lambda^\alpha v(\lambda x, T-\lambda^{1+\alpha} t),\quad \textrm{for}\;\lambda>1,
\end{equation}
where $\mathcal{T}$ is the temporal translation $\mathcal{T} v(x,t)=v(x,T-t)$.
In terms of the similarity variables
\begin{equation}
  y:= \frac{x}{(T-t)^{\frac{1}{1+\alpha}}},\quad s:= \log\Big(\frac{T}{T-t} \Big), \quad \alpha>-1,
\end{equation}
the discretely self-similar solution $(v,p)$ is given by that for all $(x,t)\in \mathbb{R}^N\times ]-\infty,T[$,
\begin{equation}\label{eq:DSS1}
  v(x,t)= \frac{1}{(T-t)^{\frac{\alpha}{1+\alpha}}} V(y,s),
\end{equation}
and
\begin{equation}\label{eq:DSS1p}
  p(x,t)= \frac{1}{(T-t)^{\frac{2\alpha}{1+\alpha}}} P(y,s)+ c(t),
\end{equation}
where $V(y,s)$ and $P(y,s)$ are periodic-in-$s$ functions with the period $$S_0:=(1+\alpha)\log\lambda>0,$$
and $c(t)$ is a function depending only on $t$.
Inserting \eqref{eq:DSS1} into \eqref{eq:euler}, we formally obtain
\begin{equation}\label{eq:DSSEul}
\begin{cases}
  \partial_s V + \frac{\alpha}{\alpha+1} V + \frac{1}{\alpha +1} y\cdot\nabla V + V\cdot\nabla V + \nabla P =0, \\
  \mathrm{div}\, V=0, \\
  V|_{s=0}(y)= T^{\frac{\alpha}{1+\alpha}}v_0(T^{\frac{1}{1+\alpha}} y).
\end{cases}
\end{equation}
Under the mild assumption on $V$, e.g. $V\in L^3_s L^p_y([0,S_0]\times\mathbb{R}^{N})$, $p\in [3,\infty[$ in Theorem \ref{thm:DSS}, from \eqref{eq:pexp} we have
\begin{equation}\label{eq:Pys1}
  P(y,s)=-\frac{1}{N}|V(y,s)|^2 + \mathrm{p.v.} \int_{\mathbb{R}^N} K_{ij}(y-z) V_i(z,s) V_j(z,s)\,\mathrm{d}z.
\end{equation}

Self-similar type singularity plays an important role in the study of singularities,
and has been experimentally detected and theoretically studied in many kinds of partial differential equations (one can refer to the recent survey paper \cite{EggF}).
We here mainly focus on the discretely self-similar singular solution for the Euler equations \eqref{eq:euler}.
Discretely self-similar singularity was firstly introduced by \cite{Chop} in the context of cosmology,
and has been proposed for singularities of the Euler equations (cf. \cite{PomS,PSS}) and other various PDEs (cf. \cite{EggF}).
By definition, discretely self-similar solution \eqref{eq:DSS1}
is a natural generalization of the self-similar solution \eqref{eq:SS}: if the time periodic functions $(V,P)(y,s)$ do not depend on the $s$-variable, i.e., $(V,P)$ are stationary,
it just reduces to the usual self-similar case.

The possibility of the formation of the self-similar singular solutions in the Euler equations \eqref{eq:euler}
and their properties have been intensely studied in the mathematical literature such as
\cite{BroS,Chae1,Chae2,Chae-MA,ChaS,He00,Scho,Shv,Tak,Xue-SS}. But the theoretic study of discretely self-similar solutions for \eqref{eq:euler}
are relatively limited and there are only several recent works on this topic.
Chae and Tsai in \cite{ChaTs} proved some nonexistence results for the discretely self-similar solutions with time-periodic function
$V\in C^1_s C^2_y(\mathbb{R}^{3+1})$ based on the vorticity profile $\Omega=\nabla\times V$: if additionally $|V|$ and $|\nabla V|$ has the decaying asymptotics,
and
\begin{equation}\label{eq:OmCond}
  \Omega\in L^q(\mathbb{R}^3\times [0,S_0]) \quad \textrm{for some  } q\in ]0,3/(1+\alpha)[,
\end{equation}
then $V\equiv 0$ on $\mathbb{R}^{3+1}$.
They also proved the nonexistence results for the time-periodic functions $(V,P)\in C^1_{\textrm{loc}}(\mathbb{R}^{3+1})$
(with $P$ given by \eqref{eq:Pys1}) based on the velocity profile: if
\begin{equation}
\begin{split}
  & V\in L^3(0,S_0;L^r(\mathbb{R}^3)),\;\; r\in [3,9/2],\,\;\;\alpha>3/2, \quad \textrm{or}\\
  & V\in L^2(0,S_0;L^2(\mathbb{R}^3))\cap L^3(0,S_0; L^r(\mathbb{R}^3)), \,\;\; r\in[3,9/2],\;\; -1<\alpha<3/2, \quad \textrm{or}\\
  & V\in L^p(\mathbb{R}^3\times [0,S_0]),\,\;\; p\in [3,\infty[,\,\;\;-1<\alpha\leq 3/p,\quad \textrm{or} \\
  & V\in L^p(\mathbb{R}^3\times [0,S_0]),\;\; p\in [3,\infty[,\;\;\,3/2<\alpha<\infty,
\end{split}
\end{equation}
then $V\equiv 0$ on $\mathbb{R}^{3+1}$. In \cite{Chae3}, by applying the maximum principle in the far field region for the vorticity equations,
Chae proved the following result for the discretely self-similar solutions with the time-periodic vector field
$V\in C^1_s C^2_y(\mathbb{R}^{3+1})$: if additionally
$\sup_{s\in[0,S_0]}|\nabla V(y,s)|=o(1)$ as $|y|\rightarrow \infty$, and there exists $k>\alpha+1$ such that the vorticity profile $\Omega=\nabla\times V$ satisfies
\begin{equation}\label{eq:OmDec}
  |\Omega(y,s)|=O(|y|^{-k}),\quad \textrm{as}\;\, |y|\rightarrow \infty,\;\;\forall s\in [0,S_0],
\end{equation}
then $V(y,s)\equiv C(s)$ for all $y\in\mathbb{R}^3$, where $C:[0,S_0]\rightarrow \mathbb{R}^3$ is
a closed curve satisfying $C(s)=C(s+S_0)$ for all $s\in [0,S_0]$.
Chae in \cite{Chae4} also showed the unique continuation type theorem for the discretely self-similar solutions of \eqref{eq:euler} in $\mathbb{R}^3$.

In this paper we consider the discretely self-similar solutions of the Euler equations \eqref{eq:euler}
to prove some nonexistence results and show the energy behavior of the possible velocity profiles.
The first main result reads as follows, which partially improves the corresponding result of \cite{ChaTs}.
\begin{theorem}\label{thm:DSS}
  Suppose that $V\in C^1_s C^3_{y,\textrm{loc}}(\mathbb{R}^N\times\mathbb{R})$ is a periodic-in-$s$ vector field with period $S_0$,
and $P$ is defined from $V$ by \eqref{eq:Pys1} up to a function depending only on $s$. We have the following statements.
\begin{enumerate}[(1)]
\item
If additionally $V\in L^3 ([0,S_0];L^p(\mathbb{R}^N))$ with some $p\in [3,\infty[$,
then for $\alpha>\frac{N}{2}$ and $-1< \alpha\leq \frac{N}{p}$, we have $V\equiv 0$,
while for $\frac{N}{p} < \alpha \leq \frac{N}{2}$, we have
\begin{equation}\label{eq:conc}
   \sup_{s\in [0,S_0]}\int_{|y|\leq L}|V(y,s)|^2 \mathrm{d}y \lesssim L^{N-2\alpha},\quad \forall L\gg1.
\end{equation}
In particular, for $\frac{N}{p} < \alpha <\frac{N}{2}$, we have either $V\equiv 0$ or
\begin{equation}\label{eq:conc2}
   \int_0^{S_0}\int_{|y|\leq L}|V(y,s)|^2 \mathrm{d}y\mathrm{d}s \sim L^{N-2\alpha},\quad \forall L\gg1.
\end{equation}
\item
For $\alpha= \frac{N}{2}$, if $V\in L^2_{s,y}(\mathbb{R}^N\times [0,S_0])$ (which is slightly weaker than \eqref{eq:conc})
and there exists some constant $0<\delta<1$ such that
\begin{equation}\label{eq:cond3}
  \sup_{s\in[0,S_0]}|V(y,s)|\lesssim |y|^\delta, \quad \forall |y|\gg1,
\end{equation}
then we have
\begin{equation}\label{eq:conc3}
  \int_0^{S_0}\int_{L\leq |y|\leq \lambda L} |V(y,s)|^2 \,\mathrm{d}y\mathrm{d}s \lesssim
  \frac{1}{L^{N+2-\epsilon}},\qquad \forall L\gg 1, \,0<\epsilon\ll 1.
\end{equation}
\end{enumerate}
\end{theorem}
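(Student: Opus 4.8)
The common backbone for all the statements is a \emph{local energy identity for the profile $V$, integrated over one period}. I would test the profile equation \eqref{eq:DSSEul} against $V\phi_L$, where $\phi_L(y)=\chi(y/L)$ for a fixed radial nonincreasing cutoff $\chi$ that equals $1$ on $\{|y|\le1\}$ and is supported in $\{|y|\le2\}$, integrate over $\mathbb{R}^N\times[0,S_0]$, discard the $\partial_s$--term by $S_0$--periodicity of $V$, and integrate by parts using $\mathrm{div}\,V=0$. Since $V\in C^1_sC^3_{y,\mathrm{loc}}$ and $\phi_L$ is compactly supported this is a genuine identity,
\begin{equation*}
\begin{split}
\frac{2\alpha-N}{2(\alpha+1)}\int_0^{S_0}\!\!\int_{\mathbb{R}^N}|V|^2\phi_L
&=\frac{1}{2(\alpha+1)}\int_0^{S_0}\!\!\int|V|^2\,y\!\cdot\!\nabla\phi_L
+\tfrac12\int_0^{S_0}\!\!\int|V|^2\,V\!\cdot\!\nabla\phi_L \\
&\quad +\int_0^{S_0}\!\!\int P\,V\!\cdot\!\nabla\phi_L .
\end{split}
\end{equation*}
The whole right side is supported in the dyadic shell $A_L:=\{L\le|y|\le2L\}$, where $|y\!\cdot\!\nabla\phi_L|\lesssim1$ and $|\nabla\phi_L|\lesssim L^{-1}$; because $\chi$ is nonincreasing the first flux is signed, and after a finite hole-filling over a chain of overlapping shells it controls a fixed multiple of $\int_0^{S_0}\!\int_{\{L\le|y|\le\lambda L\}}|V|^2$. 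The other two are the convective and the pressure fluxes across $A_L$, and I split $P=-\tfrac1N|V|^2+\widetilde P$ with $\widetilde P$ the Calder\'on--Zygmund part of \eqref{eq:Pys1}.

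For part (1) the sign of $\sigma:=2\alpha-N$ decides the outcome. Using $V\in L^3([0,S_0];L^p)$ and the Calder\'on--Zygmund bound $\|P(\cdot,s)\|_{L^{p/2}}\lesssim\|V(\cdot,s)\|_{L^p}^2$, H\"older on $A_L$ controls each flux, e.g. $L^{-1}\int_0^{S_0}\!\int_{A_L}|V|^3\lesssim L^{N(1-3/p)-1}\|V\|_{L^3_sL^p_y(A_L)}^3$ and analogously for the pressure flux; combined with the identity, run as an iteration in the scale $L$, this yields the concentration bound $\int_0^{S_0}\!\int_{|y|\le L}|V|^2\lesssim L^{N-2\alpha}$ whenever $\alpha>N/p$. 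When $\alpha>\tfrac N2$ this right side tends to $0$ as $L\to\infty$, forcing $V\equiv0$ by continuity; when $N/p<\alpha\le\tfrac N2$ it is exactly \eqref{eq:conc}, and if $V\not\equiv0$ the same identity produces a matching lower bound $\gtrsim L^{N-2\alpha}$ (the fluxes cannot all be $o(L^{N-2\alpha})$), giving the dichotomy \eqref{eq:conc2}. The remaining range $-1<\alpha\le N/p$ is handled separately by the subcriticality of the $L^p$--scaling, which again closes the iteration to $V\equiv0$.

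For part (2) the significance of $\alpha=\tfrac N2$ is that $\sigma=0$: the coercive term in the identity disappears, leaving a pure balance of fluxes over $A_L$, which is exactly why the mild growth bound \eqref{eq:cond3} is imposed. On $A_L$ one has $|V|\lesssim L^\delta$, so the convective flux and the $|V|^2$--part of the pressure flux are each $\lesssim L^{\delta-1}\int_0^{S_0}\!\int_{A_L}|V|^2$; since $\delta<1$, for $L\gg1$ these are absorbed by the good term, and the identity collapses to
\begin{equation*}
\int_0^{S_0}\!\!\int_{\{L\le|y|\le\lambda L\}}|V|^2\ \lesssim\ \frac1L\int_0^{S_0}\!\!\int_{A_L}|\widetilde P|\,|V|,\qquad L\gg1 .
\end{equation*}
I would then split $\widetilde P=\widetilde P^{\mathrm{near}}+\widetilde P^{\mathrm{far}}$ according to whether the source $V\otimes V$ lies in $\{|z|\sim L\}$ or its complement. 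For $\widetilde P^{\mathrm{near}}$, $L^2$--boundedness of Calder\'on--Zygmund operators together with the local upgrade $\|V(\cdot,s)\|_{L^4(\{|y|\sim L\})}^2\lesssim L^\delta\|V(\cdot,s)\|_{L^2(\{|y|\sim L\})}$ from \eqref{eq:cond3} gives $\|\widetilde P^{\mathrm{near}}\|_{L^2_{s,y}}\lesssim L^\delta\|V\|_{L^2_{s,y}(\{|y|\sim L\})}$; for $\widetilde P^{\mathrm{far}}$, the kernel decay $|K_{ij}(y-z)|\lesssim|z|^{-N}$ on the far region, the same $L^4$--upgrade inside a dyadic decomposition of the source, and the summable weights $(2^kL)^{-N}$ yield $\|\widetilde P^{\mathrm{far}}\|_{L^2_{s,y}(A_L)}\lesssim L^{\delta-N/2}\bigl(\sum_k(2^kL)^{N+2\delta}\int_0^{S_0}\!\int_{\{|z|\sim2^kL\}}|V|^2\bigr)^{1/2}$.

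Finally, set $E(L):=\int_0^{S_0}\!\int_{|y|\ge L}|V|^2$, which is finite because $V\in L^2_{s,y}$. Feeding the two pressure bounds into the collapsed identity by Cauchy--Schwarz shows that if $E(M)\lesssim M^{-\beta}$ for all $M\ge L_\ast$, then $\int_0^{S_0}\!\int_{\{L\le|y|\le\lambda L\}}|V|^2\lesssim L^{2\delta-2-\beta}$, and hence $E(L)\lesssim L^{2\delta-2-\beta}$. Starting from the trivial $\beta=0$ and iterating (each step gains $2-2\delta>0$), after finitely many steps the decay reaches the order $L^{-(N+2-\epsilon)}$, which is \eqref{eq:conc3}. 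Two preliminaries must be dealt with first: that $\widetilde P$ is meaningful for a possibly non-decaying $V$ --- here $V\in L^2_{s,y}$ gives $V(\cdot,s)\in L^2(\mathbb{R}^N)$ for a.e.\ $s$, so \eqref{eq:Pys1} is valid for a.e.\ $s$, and in the regime where only \eqref{eq:cond3} is available one renormalizes the kernel as $K_{ij}(y-z)-K_{ij}(-z)-y\!\cdot\!\nabla K_{ij}(-z)$ up to an affine-in-$y$, $s$-dependent correction (absorbed by the $c(s)$ freedom of $P$), which converges absolutely precisely because $\delta<1$. \textbf{The main obstacle is exactly this nonlocal pressure flux:} the pressure at scale $L$ depends on $V$ at every scale, so propagating decay through it forces one to sum the still-weak decay $E(2^kL)$ of the running bootstrap against the kernel weights and to verify convergence --- it is this step that genuinely uses \emph{both} hypotheses of part (2), the sub-linear growth $\delta<1$ and the global $L^2_{s,y}$ bound, and around which the whole iteration is organized.
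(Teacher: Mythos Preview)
Your overall architecture---local energy identity, control of the pressure via Calder\'on--Zygmund theory, and a bootstrap in the scale $L$---is the same as the paper's, but the implementation differs in one structural way and has one genuine gap in part~(1).

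\textbf{Structural difference.} You test the profile equation against $V\phi_L$ and integrate over one full period, discarding $\partial_sV$ by periodicity. The paper instead starts from the \emph{physical} local energy equality between two arbitrary times $t_1<t_2$, passes to similarity variables, and then, for $\tau_1,\tau_2\in[0,S_0]$, takes a supremum in one endpoint and an average in the other. This is what produces the comparison $|S_0J_2-I_1|\le CK_2$ between the \emph{sup-in-$s$} localized energy at a large scale and the \emph{time-averaged} energy at a fixed small scale, with $K_2$ a sum of fluxes over all intermediate dyadic scales. Your single-scale, period-averaged identity yields only time-averaged quantities; to recover the $\sup_{s}$ bound \eqref{eq:conc} you would need an extra ODE-in-$s$ argument (the map $s\mapsto\int|V|^2\phi_L$ solves a linear first-order equation with periodic data), which you do not mention. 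For the nonexistence conclusions and for \eqref{eq:conc2}, \eqref{eq:conc3} this is harmless, but for \eqref{eq:conc} as stated it is a real, though fixable, omission.

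\textbf{The gap in part (1).} Your sentence ``combined with the identity, run as an iteration in the scale $L$'' hides the actual mechanism. A single application of H\"older on the shell gives only $L^{-1}\int_0^{S_0}\!\int_{A_L}(|V|^3+|P||V|)\lesssim L^{N-3N/p-1}$, which need not tend to zero (e.g.\ $N=3$, $p\ge 9/2$), and for $N/p<\alpha<N/2$ your identity has the wrong sign to close directly. What the paper does---and what is missing from your plan---is an \emph{interpolation bootstrap}: once a preliminary bound $\sup_s\int_{|y|\le L}|V|^2\lesssim L^{\beta}$ is available, one interpolates $\int_0^{S_0}\!\int_{|y|\le L}|V|^3\lesssim L^{\beta\theta_p}$ with $\theta_p=\tfrac{p-3}{p-2}$ (using the fixed $L^3_sL^p_y$ control), feeds this back into the summed flux $K_2=\sum_{k}\lambda^{-k(N+1-2\alpha)}\int_{|y|\sim\lambda^k}(|V|^3+|P||V|)$, and iterates; each pass shrinks the exponent by roughly $1$, terminating at $N-2\alpha$ after finitely many steps (with one extra pass to kill the logarithm). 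The same interpolation, now summed from a large scale \emph{inward}, drives the contradiction that proves the lower bound in \eqref{eq:conc2}. Without this $L^2\!\leftrightarrow\!L^p$ interpolation your iteration does not close for the full range of $\alpha$ and $p$.

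For part~(2) your plan is close to the paper's: at $\alpha=N/2$ the coercive term vanishes, the near-field cubic and pressure fluxes are absorbed by the factor $L^{\delta-1}$, and the remaining nonlocal pressure is split into an inner piece (controlled by the global $L^2$ norm and contributing $L^{-N/2-1}$) and an outer dyadic tail. Your claimed gain of $2-2\delta$ per iteration is optimistic---the outer tail only yields $1-\delta$---but this does not affect the conclusion, since finitely many steps still reach $L^{-(N+2-\epsilon)}$.
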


Next we consider the velocity profiles with nondecreasing spatial asymptotics, e.g.,
\begin{equation}\label{eq:asymp1}
  1\lesssim \sup_{s\in [0,S_0]}|V(y,s)|\lesssim |y|^\delta, \quad \forall |y|\gg1,\quad\textrm{for some }\delta\in ]0,1[,
\end{equation}
which are also reasonable and possible candidates: indeed, from the energy equality
$\|v(t)\|_{L^\infty_T L^2_x}= \|v_0\|_{L^2}$ and using the scenario \eqref{eq:DSS1},
we heuristically get
\begin{equation}\label{eq:keyest1}
  \int_{|x|\leq 1}|v(x,t)|^2\,\mathrm{d}x = L^{2\alpha-N} \int_{|y|\leq L}|V(y,s)|^2\,\mathrm{d}y\leq C,\quad \textrm{with}\;\;L=(T-t)^{-\frac{1}{1+\alpha}},
\end{equation}
which corresponds to \eqref{eq:conc} for all $\alpha>-1$,
and thus implies that $V$ possibly can have nondecreasing asymptotics for $-1<\alpha\leq 0$. In order to do so,
we need a refined version of representation formula of the pressure profile in this situation,
since the formula \eqref{eq:Pys1} does not work for the case \eqref{eq:asymp1}. It turns out that the needed representation formula, which is justified in the next section,
can be expressed as (up to a function depending only on $s$)
\begin{equation}\label{eq:Pys0}
  P(y,s)= -\frac{1}{N} |V(y,s)|^2 + p.v.\int_{\mathbb{R}^N} K_{ij}(y-z) V_i(z,s) V_j(z,s)\,\mathrm{d}z + \bar{P}(y,s) + A(s)\cdot y
\end{equation}
where $A(s)\in C(\mathbb{R};\mathbb{R}^N)$ is a fixed periodic-in-$s$ vector-valued function with the period $S_0$ (especially, $A(s)\equiv 0$, if $\alpha>-\frac{1}{2}$ and $\delta<\frac{1}{2}$ in \eqref{eq:asymp1}) and
\begin{equation*}
\bar{P}(y,s)=\left\{
\begin{aligned}
    &-\int_{|z|\geq M}K_{ij}(z) V_i(z,s) V_j(z,s)\,\mathrm{d}z,  & \textrm{if}\;\;1\lesssim \sup_{s\in[0,S_0]}|V(z,s)| \lesssim |z|^\delta,\delta\in [0,\frac{1}{2}[,\, \\
    &-\int_{|z|\geq M} \big(K_{ij}(z)+ y\cdot\nabla K_{ij}(z)\big)V_i V_j(z,s)\,\mathrm{d}z , &  \textrm{if}\;\; |z|^{\frac{1}{2}}\lesssim \sup_{s\in [0,S_0]}|V(z,s)|\lesssim |z|^\delta,\delta\in[\frac{1}{2},1[,
\end{aligned}
\right.
\end{equation*}
with $M>0$ a large number so that \eqref{eq:asymp1} holds for all $|y|\geq M$.
In practice, by using the decompositions like \eqref{eq:decom1}, \eqref{eq:I2Ldec1}, \eqref{eq:I2Ldec2}, it can be proved that $P(y,s)$ defined by \eqref{eq:Pys0} is meaningful and belongs to
$C^0_s C^2_{y,\textrm{loc}}(\mathbb{R}^{N+1})$ under the assumptions \eqref{eq:asymp1} and $V\in C^1_s C^3_{y,\textrm{loc}}(\mathbb{R}^{N+1})$.

Our second main result is as follows.
\begin{theorem}\label{thm:DSS2}
  Suppose that $V\in C^1_s C^3_{y,\mathrm{loc}}(\mathbb{R}^{N+1})$ is a periodic-in-$s$ vector field with period $S_0$,
and $P$ is defined from $V$ through \eqref{eq:Pys0} up to a function depending only on $s$.
\begin{enumerate}[(1)]
\item
If additionally there is a small number $0<\epsilon_0\ll 1$ and some $\delta\in [\epsilon_0,1[$ so that
\begin{equation}\label{eq:Vcond}
  |y|^{\epsilon_0}\lesssim\sup_{s\in [0,S_0]} |V(y,s)| \lesssim |y|^\delta,\qquad \forall |y|\gg 1,
\end{equation}
then the only possible range of $\alpha$ to admit nontrivial velocity profiles is $-\delta\leq \alpha \leq -\epsilon_0$, and the nontrivial profiles corresponding to each $\alpha$ satisfy that
\begin{equation}\label{eq:Vconc}
  \sup_{s\in[0,S_0]}\int_{|y|\leq L}|V(y,s)|^2\,\mathrm{d}y \sim L^{N-2\alpha},\qquad \forall L\gg1.
\end{equation}
\item
If additionally $\alpha>-\frac{1}{2}$ and there is some number $\delta\in]0, \frac{1}{2}[$ so that
\begin{equation}\label{eq:Vcond2}
  1\lesssim \sup_{s\in [0,S_0]} |V(y,s)| \lesssim |y|^\delta,\qquad \forall |y|\gg 1,
\end{equation}
then the only possible range of $\alpha$ to admit nontrivial velocity profiles is $-\delta\leq \alpha \leq 0$,
and the nontrivial profiles corresponding to each $\alpha$ satisfy \eqref{eq:Vconc}.
\end{enumerate}
\end{theorem}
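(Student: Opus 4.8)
The plan is to follow the same scheme as Theorem~\ref{thm:DSS}: a local energy identity for the similarity profile, combined with a bootstrap on the localized energy
\[
\mathcal{E}(L):=\int_0^{S_0}\!\!\int_{|y|\le L}|V(y,s)|^2\,\mathrm{d}y\,\mathrm{d}s,\qquad L\gg1,
\]
the one genuinely new ingredient being the refined pressure representation \eqref{eq:Pys0} (whose meaningfulness and $C^0_sC^2_{y,\mathrm{loc}}$-regularity are justified in the next section), which replaces \eqref{eq:Pys1} and only alters the estimate of the pressure flux through large spheres. Case~(2) is the simpler instance, where $A(s)\equiv0$ and $\bar P$ is independent of $y$; case~(1) uses the full formula, with the first-order kernel correction in $\bar P$ when $\delta\in[\tfrac12,1[$ and a possibly nonzero linear term $A(s)\cdot y$.

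\textbf{Step 1 (local energy identity).} Test the momentum equation in \eqref{eq:DSSEul} against $\varphi_L^2V$, with $\varphi_L(y)=\varphi(\log(|y|/L))$ radial, $\varphi\equiv1$ on $]-\infty,0]$ and $\varphi\equiv0$ on $[\log\kappa,\infty[$ for a fixed $\kappa>1$, and integrate over $\mathbb{R}^N\times[0,S_0]$. Periodicity in $s$ kills the $\partial_s$ contribution and $\mathrm{div}\,V=0$ turns the rest into annular integrals; the cutoff is arranged so that $y\cdot\nabla\varphi_L^2\le0$, giving a favourable sign and yielding
\[
\frac{N-2\alpha}{2(\alpha+1)}\,\mathcal{E}(L)+c\,\mathcal{D}_L\ \le\ C\,L^{\delta-1}\mathcal{D}_L+C\,L^{-1}\mathcal{D}_L^{1/2}\Big(\int_0^{S_0}\!\!\int_{\Sigma_L}|\widetilde P|^2\Big)^{1/2},
\]
where $\Sigma_L:=\{L\le|y|\le\kappa L\}$, $\mathcal{D}_L:=\int_0^{S_0}\!\int|y\cdot\nabla\varphi_L^2|\,|V|^2\ge0$, $c=c(\alpha,N)>0$, $\tfrac{N-2\alpha}{2(\alpha+1)}>0$ since $\alpha\le0<N/2$, and $\widetilde P=P-(\text{part depending only on }s)$, the $s$-only part dropping because $\int(V\cdot\nabla\varphi_L^2)\,\mathrm{d}y=0$; here $|V|\lesssim L^\delta$ on $\Sigma_L$ was used to bound the cubic shell term by $L^{\delta-1}\mathcal{D}_L$, which is absorbed into the left for $L$ large. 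A companion argument, testing instead against $\varphi_L^2$ without the time average and solving the resulting linear ODE in $s$ with periodic boundary condition, delivers the $s$-pointwise bound $\sup_s\int_{|y|\le L}|V|^2\lesssim(\text{time-averaged flux errors})$, which is used to pass from $\mathcal{E}$-bounds to \eqref{eq:Vconc}.

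\textbf{Step 2 (pressure flux).} It remains to control $\big(\int_0^{S_0}\!\int_{\Sigma_L}|\widetilde P|^2\big)^{1/2}$. Insert \eqref{eq:Pys0} and split the $z$-integral of the Calder\'on--Zygmund part into $z\in B_L$, $z\in\Sigma_L'$ (a slight enlargement of $\Sigma_L$), and $z\in B_{\kappa'L}^c$. The local term $-\tfrac1N|V|^2$ is $\lesssim L^{2\delta}$ on $\Sigma_L$; the self-part from $\Sigma_L'$ is handled by the Calder\'on--Zygmund $L^2$-bound and $\||V|^2\mathbf{1}_{\Sigma_L'}\|_{L^2}\lesssim L^\delta\|V\|_{L^2(\Sigma_L')}$, producing after Young a term $\lesssim L^{2\delta-1}\mathcal{D}_L$ that is again absorbed; the inner part ($z\in B_L$, far kernel), the outer part ($|z|\ge\kappa'L$), and the contributions of $\bar P$ — and of $A(s)\cdot y$ in case~(1), which after integration by parts equals $-\int_0^{S_0}A(s)\cdot\!\int\varphi_L^2V$ and is bounded by $L^{N/2}\mathcal{E}(\kappa L)^{1/2}$, hence of lower order for $\alpha<0$ — are slowly varying far-kernel integrals over $\Sigma_L$. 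Expanding them to first order in $y$ about the centre of $\Sigma_L$, using $\int(V\cdot\nabla\varphi_L^2)\,\mathrm{d}y=0$ and $\int(V\cdot\nabla\varphi_L^2)y\,\mathrm{d}y=-\int\varphi_L^2V\,\mathrm{d}y$, and estimating the gradients and remainders dyadically in the spirit of \eqref{eq:decom1}, \eqref{eq:I2Ldec1}, \eqref{eq:I2Ldec2} by the concentration bound $\sup_s\int_{|y|\le R}|V|^2\lesssim R^{N-2\alpha}$ (itself produced by Step~1, as for \eqref{eq:conc}), one reduces everything to the scale-critical contribution. For $\delta<\tfrac12$ the zeroth-order kernel suffices; for $\delta\in[\tfrac12,1[$ the first-order Taylor remainder $|K_{ij}(y-z)-K_{ij}(z)-y\cdot\nabla K_{ij}(z)|\lesssim|y|^2|z|^{-N-2}$ built into $\bar P$ (together with $\alpha>-\tfrac12$ or its surrogate) is what makes the dyadic sums converge.

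\textbf{Step 3 (bootstrap, range of $\alpha$, two-sided bound).} Feeding Step~2 into the identity of Step~1 and absorbing the $\mathcal{D}_L$-terms leaves a self-improving inequality for $\mathcal{E}(L)$, coupled with one for $\sup_s\int_{|y|\le L}|V|^2$. Starting from the crude a priori bound $\mathcal{E}(L)\lesssim L^{N+2\delta}$ (from the upper asymptotics in \eqref{eq:Vcond}/\eqref{eq:Vcond2}) and iterating, the growth exponent decreases by a fixed positive amount at each pass and stabilizes at the scale-critical value $N-2\alpha$ imposed by the equality. If $\alpha<-\delta$ this critical value exceeds the a priori ceiling $N+2\delta$, impossible unless $V\equiv0$; if $\alpha>-\epsilon_0$ in case~(1), resp. $\alpha>0$ in case~(2), the upper concentration bound $\sup_s\int_{|y|\le L}|V|^2\lesssim L^{N-2\alpha}$ is incompatible with the lower asymptotics in \eqref{eq:Vcond}/\eqref{eq:Vcond2} transported by the equation, again forcing $V\equiv0$; since the hypotheses exclude $V\equiv0$, the admissible range is exactly $-\delta\le\alpha\le-\epsilon_0$, resp. $-\delta\le\alpha\le0$. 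For $\alpha$ in that range the iteration gives the upper half of \eqref{eq:Vconc}; the matching lower bound follows because $\liminf_{L\to\infty}L^{-(N-2\alpha)}\mathcal{E}(L)=0$ would permit re-running the bootstrap with a strictly lower floor and hence conclude $V\equiv0$, so this $\liminf$ is positive, and combined with the almost-monotonicity of $L\mapsto L^{-(N-2\alpha)}\mathcal{E}(L)$ — the Step~1 remainder being summable over dyadic scales — this upgrades to $\mathcal{E}(L)\sim L^{N-2\alpha}$, whence \eqref{eq:Vconc} via the $s$-pointwise bound of Step~1.

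\textbf{Expected main obstacle.} The crux is Step~2. Under \eqref{eq:asymp1} the Calder\'on--Zygmund integral in \eqref{eq:Pys1} is no longer absolutely convergent, so one must not only invoke the representation \eqref{eq:Pys0} but also extract from it a \emph{quantitative} shell estimate for $\widetilde P$ with exactly the $L$-powers needed for the bootstrap to close at the exponent $N-2\alpha$ and no lower: this means cleanly separating the absorbable ``self'' part of the pressure from the genuinely lower-order inner and outer tails, uniformly in $s$, and controlling the latter — after extracting their affine parts — by the evolving concentration bound rather than by the static pointwise bound $|V|\lesssim|y|^\delta$, which is precisely where the corrections $\bar P$ and $A(s)\cdot y$ and the dyadic decompositions do the work. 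A secondary delicate point, needed to turn the hypotheses into the stated dichotomy, is verifying that for $\alpha$ outside the admissible window the bootstrap — respectively, the equation applied to the lower asymptotics — genuinely collapses $V$ to zero.
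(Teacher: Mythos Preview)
Your overall scheme (local energy identity $+$ pressure flux estimate $+$ bootstrap on the localized energy) is indeed the paper's scheme, and your identification of the refined pressure formula \eqref{eq:Pys0} as the new ingredient is correct. But two points deserve comment.

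\textbf{Derivation of the local energy inequality.} You test the profile equation \eqref{eq:DSSEul} directly against $\varphi_L^2 V$ at a single scale $L$. The paper instead works with the local energy equality \eqref{eq:EneE} for the \emph{original} velocity $v$, substitutes the DSS ansatz, and then exploits the time integral $\int_{s_1}^{s_2}$: since varying $s$ corresponds to sweeping through all spatial scales between $l_1=e^{s_1/(1+\alpha)}$ and $l_2=e^{s_2/(1+\alpha)}$, the right-hand side becomes a dyadic sum $K_2=\sum_k (l_1\lambda^k)^{-(N+1-2\alpha)}\int_{|y|\sim l_1\lambda^k}(|V|^3+|P||V|)$ over \emph{all} scales between $l_1$ and $l_2$ (see \eqref{eq:K1K2}). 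This multi-scale structure is what makes the bootstrap transparent: one sends $l_2\to\infty$ (possible precisely when $\alpha<-\delta$, since then $l_2^{2\alpha-N}\int_{|y|\le l_2}|V|^2\lesssim l_2^{2(\alpha+\delta)}\to0$) to obtain \eqref{eq:key1}, and then each application of the pressure lemma (Lemma~\ref{lem:pres2}) improves the exponent $b$ in $\sup_s\int_{|y|\le R}|V|^2\lesssim R^b$ uniformly in $R$. Your single-scale identity can in principle be iterated too, but you would have to make explicit how an improved bound at all scales $R$ feeds back through the pressure flux at the single scale $L$; as written this mechanism is only sketched.

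\textbf{The logic for $\alpha<-\delta$.} Your Step~3 says the iteration ``stabilizes at the scale-critical value $N-2\alpha$'' and then, for $\alpha<-\delta$, ``this critical value exceeds the a priori ceiling $N+2\delta$, impossible unless $V\equiv0$''. This inverts the argument. That $N-2\alpha>N+2\delta$ is not itself a contradiction; it is precisely the reason the iteration \emph{does not} stabilize: the fixed point lies above the starting exponent, so each pass continues to lower the exponent (the paper carries this out explicitly in \eqref{eq:Ves-ref}--\eqref{eq:keyeq3}), until it drops below $N+2\epsilon_0$. \emph{That} is the contradiction, since \eqref{eq:Vcond} gives the hard lower bound $\sup_s\int_{|y|\le L}|V|^2\gtrsim L^{N+2\epsilon_0}$. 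Similarly, for the lower half of \eqref{eq:Vconc} the paper argues by contradiction along a subsequence $L_n\to\infty$, re-derives \eqref{eq:V2key3} (same form as \eqref{eq:key1}), and iterates down past $N+2\epsilon_0$; your ``almost-monotonicity'' phrasing is in the right spirit but the actual mechanism is the same unbounded descent. For case~(2) the paper's iteration is simpler (fixed decrement $1-\delta$ per step, using \eqref{eq:estq3} rather than \eqref{eq:estq2}), terminating once the exponent falls below the lower bound $N$ from \eqref{eq:Vcond2}.
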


The proofs of Theorem \ref{thm:DSS} and \ref{thm:DSS2} are both based on the local energy inequalities of the velocity profiles \eqref{eq:locEne1}-\eqref{eq:locEne2},
which in turn is derived from the energy equality of the original equality \eqref{eq:EneE}. Then by virtue of a careful treating of the terms containing the pressure profile
(cf. Lemma \ref{lem:pres1} and \ref{lem:pres2}), the proofs are finished through using the bootstrapping method according to the values of $\alpha$ and the assumptions of the velocity profiles.

\begin{remark}\label{rmk1}
  From \eqref{eq:conc2} and \eqref{eq:Vconc}, we can expect that for every $-1<\alpha<\frac{N}{2}$
the corresponding ``typical" possible velocity profiles have the following asymptotics:
\begin{equation*}
  \sup_{s\in [0,S_0]} |V(y,s)|\sim \frac{1}{|y|^\alpha} + o\Big(\frac{1}{|y|^\alpha}\Big),\quad \forall |y|\gg1,
\end{equation*}
and by scaling, we can also expect that
\begin{equation}\label{eq:Omeg}
  \sup_{s\in [0,S_0]} |\Omega(y,s)| \sim \frac{1}{|y|^{\alpha+1}} + o\Big(\frac{1}{|y|^{\alpha+1}}\Big),\quad \forall |y|\gg1,
\end{equation}
with $\Omega:=\nabla\times V$. Note that by comparing \eqref{eq:Omeg} with \eqref{eq:OmCond} and \eqref{eq:OmDec},
we see \eqref{eq:Omeg} is compatible with the nonexistence results of \cite{Chae3,ChaTs} based on the vorticity profiles.
\end{remark}

\begin{remark}\label{rmk2}
  If \eqref{eq:condDSS} holds on the spacetime domain $B_r(0)\times ]-\infty,T[$ with some $r>0$,
then the corresponding solution $(v,p)$ is called the locally discretely self-similar solution.
For such singular solutions, so far it is not clear to show the analogous results as Theorem \ref{thm:DSS} and \ref{thm:DSS2}.
Part of the reason is that the profiles $(V,P)$ are no longer genuinely time periodic functions for $(y,s)\in \mathbb{R}^{N+1}$.
\end{remark}

The outline of this paper is as follows. In Section \ref{sec:pres}, we state and justify the representation formula of the pressure profile in terms of velocity profiles in the considered cases.
In Section \ref{sec:locEne}, we prove the key local energy inequality of the velocity profiles.
Relied on these results, we give the detailed proofs of Theorem \ref{thm:DSS} and \ref{thm:DSS2} in the sections \ref{Sec:thm1} and \ref{sec:Thm2}
respectively. At last we present in Section \ref{sec:lem} two auxiliary and useful lemmas about the terms including the pressure profile.

Throughout this paper, $C$ denotes a harmless constant which may be of different value from line to line. For two quantities $X,Y$,
$X \lesssim Y$ denotes that there is a constant $C>0$ such that $X\leq C Y$, and $X\sim Y$ means that $X\lesssim Y$ and
$Y\lesssim X$. For a real number $a$, denote by $[a]$ its integer part. For $x_0\in \mathbb{R}^N$, $r>0$, denote by $B_r(x_0)$ the open ball of
$\mathbb{R}^N$ centered at $x_0$ with radius $r$, and denote by $B^c_r(x_0)$ its complement set $\mathbb{R}^N\setminus B_r(x_0)$.

\section{Justification of the representation formula of pressure profile}\label{sec:pres}

In this section we justify the needed representation formula of the pressure formula stated at above.
\begin{lemma}\label{lem:VP}
  Suppose $\alpha>-1$, $v$ is a discretely self-similar solution to the Euler equations given by \eqref{eq:DSS1} and the profile
$V\in C^1_s C^3_{y,\mathrm{loc}}(\mathbb{R}^{N+1})$ is a periodic-in-$s$ vector field with period $S_0$.
Then the corresponding pressure profile $P$, which is also periodic-in-$s$ with period $S_0$ and belongs to $ C^0_s C^2_{y,\textrm{loc}}(\mathbb{R}^{N+1})$,
is expressed as (up to a function depending only on $s$)
\begin{equation}\label{eq:Pys}
  P(y,s)= -\frac{1}{N} |V(y,s)|^2 + p.v.\int_{\mathbb{R}^N} K_{ij}(y-z) V_i(z,s) V_j(z,s)\,\mathrm{d}z + \bar{P}(y,s) + A(s)\cdot y,
\end{equation}
 where $\bar{P}(y,s)$ is given by
\begin{equation}\label{eq:Pbar}
\left\{
\begin{aligned}
    &-\int_{|z|\geq M}K_{ij}(z) V_i(z,s) V_j(z,s)\,\mathrm{d}z,  & \textrm{if}\;\;1\lesssim \sup_{s\in[0,S_0]}|V(z,s)| \lesssim |z|^\delta,\delta\in [0,\frac{1}{2}[,\, \\
    &-\int_{|z|\geq M} \big(K_{ij}(z)+ y\cdot\nabla K_{ij}(z)\big)V_i V_j(z,s)\,\mathrm{d}z , &  \textrm{if}\;\; |z|^{\frac{1}{2}}\lesssim \sup_{s\in [0,S_0]}|V(z,s)|\lesssim |z|^\delta,\delta\in[\frac{1}{2},1[,
\end{aligned}
\right.
\end{equation}
and $A(s)\in C(\mathbb{R};\mathbb{R}^N)$ is a fixed vector-valued periodic-in-$s$ function with period $S_0$ satisfying
\begin{equation*}
  A(s)\equiv 0,\quad \textrm{if}\;\;
    1\lesssim \sup_{s\in[0,S_0]}|V(z,s)|\lesssim |z|^\delta, \delta\in [0,\frac{1}{2}[,\;\textrm{and}\;\; \alpha>-\frac{1}{2}.
\end{equation*}
In the above, $M>0$ is a large number so that $1\lesssim \sup_{s\in [0,S_0]}|V(z,s)|\lesssim |z|^\delta$ holds for all $|z|\geq M$.
\end{lemma}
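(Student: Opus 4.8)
The starting point is that, since $(v,p)$ solves \eqref{eq:euler} and $v$ has the form \eqref{eq:DSS1}, the profiles satisfy the first line of \eqref{eq:DSSEul}, which I would read as $\nabla_y P=-\partial_s V-\tfrac{\alpha}{\alpha+1}V-\tfrac{1}{\alpha+1}\,y\cdot\nabla V-V\cdot\nabla V$; thus $P$ is determined by $V$ up to a function of $s$ alone and is $S_0$-periodic in $s$ because the right-hand side is. Taking the divergence and using $\mathrm{div}\,V=0$ — which annihilates the contributions of $\partial_s V$, $\tfrac{\alpha}{\alpha+1}V$ and $\tfrac{1}{\alpha+1}y\cdot\nabla V$ — yields the pointwise Poisson equation $-\Delta_y P=\partial_i\partial_j(V_iV_j)$ for each $s$ (legitimate since $V\in C^1_sC^3_{y,\mathrm{loc}}$). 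The plan is to exhibit one explicit $C^0_sC^2_{y,\mathrm{loc}}$ solution of this equation, namely the first three terms of \eqref{eq:Pys}, and then to identify the remaining harmonic freedom.

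For the construction I would fix a cutoff $\chi$ with $\chi=1$ on $B_M$ and $\mathrm{supp}\,\chi\subset B_{2M}$ and split $V_iV_j=\chi V_iV_j+(1-\chi)V_iV_j$. The near piece is compactly supported, so the classical Calder\'on--Zygmund pressure $P^{\mathrm{in}}:=-\tfrac1N\chi|V|^2+\mathrm{p.v.}\int_{\mathbb R^N}K_{ij}(y-z)(\chi V_iV_j)(z,s)\,\mathrm dz$ (cf. \eqref{eq:pexp}) lies in $C^0_sC^2_{y,\mathrm{loc}}$ and satisfies $-\Delta_y P^{\mathrm{in}}=\partial_i\partial_j(\chi V_iV_j)$. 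On $\{|z|\ge M\}$ one only has $1\lesssim\sup_s|V|\lesssim|z|^{\delta}$, so $K_{ij}(y-z)$ is not integrable there against $V_iV_j$; the remedy is to subtract from the kernel its Taylor polynomial in $y$ at $y=0$ of order $0$ when $\delta\in[0,\tfrac12)$ and of order $1$ when $\delta\in[\tfrac12,1)$, i.e. to set $P^{\mathrm{out}}(y,s):=\int_{|z|\ge M}\big[K_{ij}(y-z)-K_{ij}(z)-\mathbf 1_{\{\delta\ge1/2\}}\,y\cdot\nabla K_{ij}(z)\big](1-\chi)V_iV_j(z,s)\,\mathrm dz$. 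For $|z|$ large the bracket is $\lesssim|y|\,|z|^{-N-1}$ (resp. $\lesssim|y|^{2}\,|z|^{-N-2}$), so that $\int_{|z|\ge M}|\cdots|\,|V_iV_j|\,\mathrm dz\lesssim\int_{|z|\ge M}|z|^{2\delta-N-1}\,\mathrm dz$ (resp. with exponent $2\delta-N-2$) converges exactly because $\delta<\tfrac12$ (resp. $\delta<1$); differentiating in $y$ up to second order only improves the decay, whence $P^{\mathrm{out}}\in C^0_sC^2_{y,\mathrm{loc}}$ with $|P^{\mathrm{out}}(y,s)|\lesssim(1+|y|)^{2\delta}$, and continuity in $s$ follows from the uniform-in-$s$ growth bound and dominated convergence. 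Recombining, $P^{\mathrm{in}}+P^{\mathrm{out}}$ coincides — up to a function of $s$, plus, when $\delta\ge\tfrac12$, a term affine in $y$ produced over the annulus $M\le|z|\le 2M$ — with the first three terms of \eqref{eq:Pys}, \eqref{eq:Pbar}; moreover the whole expression changes only by such an affine-in-$y$ function of $s$ when $M$ is varied, which, together with the linear part of the harmonic remainder found below, is exactly the freedom absorbed by the term $A(s)\cdot y$.

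Next I would verify that $\widetilde P:=P^{\mathrm{in}}+P^{\mathrm{out}}$ solves $-\Delta_y\widetilde P=\partial_i\partial_j(V_iV_j)$. Since $K_{ij}=\partial_i\partial_j\Phi$ with $\Phi$ the Newtonian potential, $K_{ij}$ is harmonic off the origin; hence for $y$ in the interior of $B_M$ (so $y\ne z$ whenever $|z|\ge M$) one gets $\Delta_y P^{\mathrm{out}}=0$, while the subtracted terms $K_{ij}(z)$, $y\cdot\nabla K_{ij}(z)$ are affine in $y$, hence harmonic too; therefore $-\Delta_y\widetilde P=-\Delta_y P^{\mathrm{in}}=\partial_i\partial_j(\chi V_iV_j)=\partial_i\partial_j(V_iV_j)$ on $B_M$, and by the $M$-independence on all of $\mathbb R^N$. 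Consequently $h:=P-\widetilde P$ is, for each $s$, an entire harmonic function of $y$, $S_0$-periodic in $s$. To pin $h$ down I would examine $\nabla_y h=\nabla_y P-\nabla_y\widetilde P=-L-\big(V\cdot\nabla V+\nabla_y\widetilde P\big)$ with $L:=\partial_s V+\tfrac{\alpha}{\alpha+1}V+\tfrac{1}{\alpha+1}y\cdot\nabla V$. Both $L$ and $V\cdot\nabla V+\nabla_y\widetilde P$ are divergence-free (for the latter, $\mathrm{div}(V\cdot\nabla V)+\Delta_y\widetilde P=\partial_i\partial_j(V_iV_j)-\partial_i\partial_j(V_iV_j)=0$), so $\nabla_y h$ is divergence-free; being also a gradient it is curl-free, hence each of its components is an entire harmonic function, and — by the growth bounds — of order $o(|y|)$, so Liouville's theorem forces $\nabla_y h$ to be constant in $y$. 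Hence $h(y,s)=A(s)\cdot y+c(s)$ with $A\in C(\mathbb R;\mathbb R^N)$ periodic, which is \eqref{eq:Pys}.

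Finally, for the sharpened claim $A\equiv0$ when $\delta\in[0,\tfrac12)$ and $\alpha>-\tfrac12$: in this regime only the order-$0$ subtraction is used, so $\nabla_y\widetilde P=O(|y|^{2\delta-1})\to0$, hence the constant $A(s)=\nabla_y h$ equals the ray limit of $\nabla_y P=-L-V\cdot\nabla V$ with $V\cdot\nabla V\to0$; translating back to $(x,t)$, a non-vanishing $A(s)$ would make $\nabla_x p=-(T-t)^{-\frac{2\alpha+1}{\alpha+1}}(L+V\cdot\nabla V)$ acquire a non-zero spatial-infinity limit $-(T-t)^{-\frac{2\alpha+1}{\alpha+1}}A(s)$ blowing up as $t\to T$ precisely because $\tfrac{2\alpha+1}{\alpha+1}>0\iff\alpha>-\tfrac12$, which is incompatible with $(v,p)$ being a solution of \eqref{eq:euler} in the considered class, while for $\alpha\le-\tfrac12$ or $\delta\ge\tfrac12$ this obstruction is absent. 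I expect the two genuinely delicate points to be: (i) the book-keeping of the second paragraph — matching the order of the Taylor subtraction to $\delta$ and checking absolute convergence, $C^0_sC^2_{y,\mathrm{loc}}$-regularity and the $(1+|y|)^{2\delta}$ growth uniformly in $s$ after two $y$-derivatives, which is exactly the decomposition argument announced after \eqref{eq:Pys0}; and (ii) the growth/Liouville step, since the hypotheses control only $|V|$: one must first propagate $|V|\lesssim|y|^{\delta}$ to corresponding bounds on $|\nabla V|,|\nabla^2 V|$ (via interior elliptic estimates for the pressure equation, or via the profile vorticity equation) before Liouville applies, and then carry out the scaling comparison distinguishing the regimes $\delta\lessgtr\tfrac12$ and $\alpha\lessgtr-\tfrac12$.
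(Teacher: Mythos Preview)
Your construction of $\widetilde P=P^{\mathrm{in}}+P^{\mathrm{out}}$ is essentially the paper's construction of $I(y,s)$ (their \eqref{eq:Iy}--\eqref{eq:I2key2}), and the matching of the Taylor order to $\delta$ is exactly right. The serious gap is in the Liouville step. You write $\nabla_y h=-L-(V\cdot\nabla V+\nabla_y\widetilde P)$ and claim it is $o(|y|)$, but the right-hand side contains $\partial_s V$, $y\cdot\nabla V$ and $V\cdot\nabla V$, none of which are controlled at infinity by the hypothesis $\sup_s|V(y,s)|\lesssim|y|^{\delta}$: the lemma assumes nothing about $|\nabla V|$ or $|\partial_s V|$ as $|y|\to\infty$, and your proposed fix (``propagate $|V|\lesssim|y|^{\delta}$ to $|\nabla V|,|\nabla^2 V|$ via interior elliptic estimates for the pressure equation, or via the profile vorticity equation'') does not work --- the pressure equation gives no information on $\nabla V$, the vorticity equation itself contains $\partial_s\Omega$ and $y\cdot\nabla\Omega$, and $\partial_s V$ is governed by no elliptic equation at all. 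So you have no growth bound on $h$ or $\nabla h$, and Liouville cannot be invoked.

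The paper closes this gap by going back to the \emph{physical} pressure. Because $v$ is a finite-energy Euler solution, $p(x,t)$ is given by the Calder\'on--Zygmund formula \eqref{eq:pexp}, and from $\|v(t)\|_{L^2}\le\|v_0\|_{L^2}$ one gets a weak-$L^1$ bound on $p$ uniformly in $t$. The paper then constructs $P(y,s)$ from $p(x,t)$ via the similarity change of variables and shows directly (their \eqref{eq:estim1}--\eqref{eq:estim2}) that $P$ is a tempered distribution on $\mathbb R^N\times[0,S_0]$. Since $I$ is also tempered and $\Delta(P-I)=0$, the difference $h$ is a harmonic polynomial in $y$; its degree is then pinned down by writing $(T-t)^{2\alpha/(1+\alpha)}p$ two ways (once through $P$, once through $I$) and comparing on balls $|y|\le\tfrac12(T-t)^{-1/(1+\alpha)}$: the remainder is bounded by $(T-t)^{2\alpha/(1+\alpha)}+(T-t)^{-2\delta/(1+\alpha)}$, i.e.\ by $R^{-2\alpha}+R^{2\delta}$ with $R=(T-t)^{-1/(1+\alpha)}$, and since $-2\alpha<2$ and $2\delta<2$ the polynomial has degree $\le1$, while for $\alpha>-\tfrac12$, $\delta<\tfrac12$ both exponents are $<1$ and $h$ depends on $s$ only. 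Your argument for $A\equiv0$ (a blow-up of $\lim_{|x|\to\infty}\nabla_x p$ as $t\to T$) is heuristic in the same way --- nothing in the hypotheses forbids such behaviour --- whereas the paper's argument reads it off directly from the exponents above.
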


\begin{remark}
  If $V\in L^r_s L^p_y ([0,S_0]\times \mathbb{R}^N)$, $r\in [2,\infty]$, $p\in ]2,\infty[$, then the integral $\textrm{p.v.}\int_{\mathbb{R}^N}K_{ij}(y-z)V_i(z,s)V_j(z,s)\,\mathrm{d}z$
is a meaningful periodic-in-$s$ function belonging to $C^0_s C^2_{y,\textrm{loc}}(\mathbb{R}^{N+1})$, and by using a similar deduction as below,
we can justify that the formula of the pressure profile takes the form \eqref{eq:Pys1} in this case.
\end{remark}

\begin{proof}[Proof of Lemma \ref{lem:VP}]
We here mainly adopt the strategy used in the proof of \cite[Lemma 2.1]{BroS} or \cite[Lemma 2.1]{Xue-SS} with suitable modification.
We first introduce a function $I(y,s)$, which is a part of \eqref{eq:Pys}, and prove that it is meaningfully defined, is a tempered distribution, and it point-wisely solves the Laplace equation
$\Delta I = -\mathrm{div}\mathrm{div}(V\otimes V)$. Then we find a tempered distribution $P(y,s)$ solving the first equation of \eqref{eq:DSSEul}. Since $P$ also solves the same Laplace equation,
the difference between $I$ and $P$ is a harmonic polynomial in the $y$-variable, and at last we prove the order of the polynomial is at most one and show the desired formula \eqref{eq:Pys}.

First define a periodic-in-$s$ function as
\begin{equation}\label{eq:Iy}
  I(y,s)= -\frac{1}{N} |V(y,s)|^2 + p.v.\int_{\mathbb{R}^N} K_{ij}(y-z) V_i(z,s) V_j(z,s)\,\mathrm{d}z + \bar{P}(y,s),\quad
\end{equation}
and we show that $I(y,s)$ is meaningful and is a tempered distribution. Let $\phi_0\in \mathcal{D}(\mathbb{R}^N)$ be a cutoff function supported on
$B_1(0)$ such that $\phi_0\equiv 1$ on $B_{1/2}(0)$ and $0\leq \phi_0\leq 1$. For any $L\geq M$, set
$\phi_L(z)=\phi_0(z/L)$, then we have
\begin{equation}\label{eq:decom1}
  I(y,s)=-\frac{1}{N}|V(y,s)|^2 + I_{1,L}(y,s) + I_{2,L}(y,s),
\end{equation}
with
\begin{equation}
\begin{split}
  & I_{1,L}(y,s) = \textrm{p.v.} \int_{\mathbb{R}^N} K_{ij}(y-z) \phi_{4L}(z) V_i(z,s) V_j(z,s)\,\mathrm{d}z,\quad \textrm{and} \\
  & I_{2,L}(y,s) = \int_{\mathbb{R}^N} K_{ij}(y-z) \big(1-\phi_{4L}(z)\big) V_i(z,s) V_j(z,s)\,\mathrm{d}z + \bar{P}(y,s).
\end{split}
\end{equation}
Since $V\in C^1_s C_{\mathrm{loc}}^3([0,S_0]\times\mathbb{R}^N)$, from the bounded property of the Calder\'on-Zygmund operator,
we infer that $I_{1,L}(y,s)\in C^1_s C^\beta_y$ for all $\beta<3$ with
\begin{equation*}
  \|I_{1,L}\|_{C^1_s C^\beta_y} \lesssim \|V\|_{C^1_s C^3_{\textrm{loc}}}^2.
\end{equation*}
We next consider $I_{2,L}(y,s)$ acting on the ball $B_L(0)$:
if $1\lesssim \sup_{s\in [0,S_0]}|V(z,s)| \lesssim |z|^\delta$, $\delta\in ]0,\frac{1}{2}[$ for all $ |z|\geq M$, from the decomposition
\begin{equation}\label{eq:I2Ldec1}
\begin{split}
  I_{2,L}(y,s) = & \int_{|z|\geq 4L}\big(K_{ij}(y-z)-K_{ij}(z)\big)V_iV_j(z,s)\,\mathrm{d}z - \int_{M\leq|z|\leq 4L}K_{ij}(z)V_iV_j(z,s)\,\mathrm{d}z \\
  & + \int_{2L\leq |z|\leq 4L}K_{ij}(y-z)\big(1-\phi_{4L}(z)\big)V_i(z,s)V_j(z,s)\,\mathrm{d}z,
\end{split}
\end{equation}
then
\begin{equation}\label{eq:I2key}
\begin{split}
  |I_{2,L}(y,s)|\leq & \,\Big|\int_{|z|\geq 4L}\big(K_{ij}(y-z)-K_{ij}(z)\big) V_i V_j(z,s)\mathrm{d}z\Big| +  C\int_{M\leq |z|\leq 4L} \frac{1}{|z|^N} |V(z,s)|^2\,\mathrm{d}z \\
  \lesssim & \int_{|z|\geq 2L} \frac{|y|}{|z|^{N+1}} |V(z,s)|^2\,\mathrm{d}z+ \int_{M\leq |z|\leq 4 L} |z|^{-N+2\delta} \mathrm{d}z\lesssim L^{2\delta};
\end{split}
\end{equation}
and if $|z|^{1/2}\lesssim \sup_{s\in [0,S_0]}|V(z,s)| \lesssim |z|^\delta$, $\delta\in [\frac{1}{2},1[$ for all $ |z|\geq M$, then from the decomposition
\begin{equation}\label{eq:I2Ldec2}
\begin{split}
  I_{2,L}(y,s) = & \int_{|z|\geq 4L}\big(K_{ij}(y-z)-K_{ij}(z)-y\cdot\nabla K_{ij}(z)\big)V_i(z,s)V_j(z,s)\,\mathrm{d}z \,- \\
  & - \int_{M\leq|z|\leq 4L}\big(K_{ij}(z)+y\cdot\nabla K_{ij}(z)\big) V_i(z,s)V_j(z,s)\,\mathrm{d}z \, +  \\
  & + \int_{2L\leq |z|\leq 4L} K_{ij}(y-z)\big(1-\phi_{4L}(z)\big)V_i(z,s)V_j(z,s)\,\mathrm{d}z,
\end{split}
\end{equation}
then
\begin{equation}\label{eq:I2key2}
\begin{split}
  |I_{2,L}(y,s)| \leq & \Big|\int_{|z|\geq 4L} \big(K_{ij}(y-z)-K_{ij}(z)-y\cdot\nabla K_{ij}(z)\big) V_i(z,s) V_j(z,s)\mathrm{d}z\Big| \\
  & \,+\, C\int_{M\leq |z|\leq 4 L} \Big( \frac{1}{|z|^N} + \frac{|y|}{|z|^{N+1}}\Big) |V(z,s)|^2 \mathrm{d}z \\
  \lesssim & \int_{|z|\geq 2L} \frac{|y|^2}{|z|^{N+2}} |V(z,s)|^2\,\mathrm{d}z+ \int_{|z|\sim L} \Big(\frac{1}{|z|^{N-2\delta}}+ \frac{|y|}{|z|^{N+1-2\delta}}\Big) \mathrm{d}z\lesssim L^{2\delta}.
\end{split}
\end{equation}
For $m=1,2$ and for all $y\in B_L(0)$, we also get that
if $1\lesssim \sup_{s\in[0,S_0]}|V(z,s)| \lesssim |z|^\delta$, $\delta\in ]0,\frac{1}{2}[$, $\forall |z|\geq M$, from the decomposition \eqref{eq:I2Ldec1},
\begin{equation*}
\begin{split}
  |\partial^m_y \big(I_{2,L}(y,s)\big)|  &\leq \Big|\partial_y^m\bigg(\int_{|z|\geq 4L} \int_0^1 y\cdot \nabla K_{ij}(\tau y-z) V_iV_j(z,s)\,\mathrm{d}\tau\mathrm{d}z  \bigg)\Big|
  + \int_{|z|\sim L} \frac{C }{|z|^{N+m}} |V(z,s)|^2\,\mathrm{d}z \\
  &\lesssim \int_{|z|\geq 2L} \frac{|y|}{|z|^{N+1+m}} |V(z,s)|^2\,\mathrm{d}z + \int_{|z|\sim L} \frac{1 }{|z|^{N+m}} |V(z,s)|^2\,\mathrm{d}z
  \lesssim L^{-m+2\delta};
\end{split}
\end{equation*}
and if $|z|^{1/2}\lesssim \sup_{s\in [0,S_0]}|V(z,s)| \lesssim |z|^\delta$, $\delta\in [\frac{1}{2},1[$, $\forall |z|\geq M$, from \eqref{eq:I2Ldec2},
\begin{equation*}
\begin{split}
  |\partial^m\big(I_{2,L}(y,s)\big)| \leq &\, \Big|\partial^m_y\bigg(\int_{|z|\geq 4L} \int_0^1 \int_0^1 \Big(y\cdot \nabla^2 K_{ij}(\tau\theta y -z)\cdot y\Big)
  V_i(z,s) V_j(z,s)\tau\mathrm{d}\theta\mathrm{d}\tau\mathrm{d}z\bigg)\Big|  \\
  & +  \Big|\partial_y^m \Big(\int_{M\leq |z|\leq 4L} y\cdot\nabla K_{ij}(z) V_iV_j(z,s) \,\mathrm{d}z\Big)\Big|  + C\int_{|z|\sim L} \frac{1}{|z|^{N+m}} |V(z,s)|^2 \mathrm{d}z \\
  \lesssim & \int_{|z|\geq 2L} \frac{|y|^2}{|z|^{N+2+m}} |V(z,s)|^2\,\mathrm{d}z+ \int_{|z|\sim L} \frac{1}{|z|^{N+m-2\delta}}  \mathrm{d}z
    \\ & \, +
  \begin{cases}
    \int_{M\leq |y|\leq 4L} \frac{1}{|z|^{N+1}} |V(z,s)|^2\,\mathrm{d}z,\quad & \textrm{if}\;\; m=1, \\
    0, \quad & \textrm{if}\;\; m=2,
  \end{cases} \\
 \lesssim  & \;L^{-m+2\delta}.
\end{split}
\end{equation*}
Hence the scalar function $I(y,s)$ defined by \eqref{eq:Iy} is $C^2_y$-smooth on $B_L(0)$ for almost everywhere $s\in [0,S_0]$.
Since $V\in C^1_sC^3_{y,\textrm{loc}}(\mathbb{R}^{N+1})$, and from the above estimates, we can also prove that for every $y\in B_L(0)$, the functions $\partial_y^m I(y,s)$, $m=0,1,2$ are continuous in $s\in [0,S_0]$,
that is, $I(y,s)\in C^0_s C^2_{y,\textrm{loc}}(\mathbb{R}^{N+1})$.
Moreover, for all $y\in B_L(0)$ and a.e. $s\in [0,S_0]$, we have
\begin{equation*}
\begin{split}
  \Delta I & \,= \Delta \Big(-\frac{1}{N} |V|^2\phi_{4L}+I_{1,L}\Big) + \Delta \big(I_{2,L}\big)\\
  & \,= -\mathrm{div}\, \mathrm{div}\big(V\sqrt{\phi_{4L}}\otimes V\sqrt{\phi_{4L}}\big)
  = -\mathrm{div}\, \mathrm{div}\big(V\otimes V\big),
\end{split}
\end{equation*}
where in the second line $\Delta( I_{2,L})=0$ due to that the term $K_{ij}(y-z)-K_{ij}(z)-y\cdot\nabla K_{ij}(z)$ is harmonic in the $y$-variable for all
$y\in B_L(0)$ and $z\in B_{2L}^c(0)$.
Besides, it is not hard to show that $I(y,s)$ is a tempered distribution on $\mathbb{R}^N\times [0,S_0]$: indeed,
we get that
for some $\tilde{p}>2$,
\begin{equation*}
  \int_0^{S_0}\int_{|y|\leq L} |I_{1,L}(y,s)|^{\frac{\tilde{p}}{2}}\,\mathrm{d}y \mathrm{d}s \lesssim
  \int_0^{S_0}\int_{|z|\leq 4L} |V(z,s)|^{\tilde{p}}\,\mathrm{d}z \mathrm{d}s \lesssim L^{N+\tilde{p}\delta},
\end{equation*}
and by \eqref{eq:I2key}, \eqref{eq:I2key2},
\begin{equation*}
  \int_0^{S_0}\int_{|y|\leq L} |I_{2,L}(y,s)|^{\frac{\tilde{p}}{2}}\,\mathrm{d}y\mathrm{d}s  \lesssim L^{N+ \tilde{p}\delta}.
\end{equation*}

Next we intend to find a tempered distributional pressure profile $P(y,s)$ solving the first equation of \eqref{eq:DSSEul}, i.e.,
\begin{equation}\label{eq:keyeq}
  \partial_s V + \frac{\alpha}{1+\alpha} V + \frac{1}{1+\alpha} y\cdot \nabla V + V\cdot\nabla V  + \nabla P = 0.
\end{equation}
Inserting the ansatz \eqref{eq:DSS1} to Euler equations \eqref{eq:euler}, and by setting
\begin{equation}
  y:=\frac{x}{(T-t)^{\frac{1}{1+\alpha}}}, \quad s:=\log\frac{T}{T-t},\quad p(x,t):= \bar p(y,s),
\end{equation}
we obtain that for all $y\in \mathbb{R}^N$, $s\in \mathbb{R}$,
\begin{equation}\label{eq:ueq2}
  \partial_s V(y,s) + \frac{\alpha}{1+\alpha} V(y,s) + \frac{1}{1+\alpha} y\cdot\nabla_y V(y,s) +
  V\cdot\nabla_y V(y,s)  + \nabla_y \Big((T-t)^{\frac{2\alpha}{1+\alpha}} \bar p(y,s)\Big)=0.
\end{equation}
For some $t$ fixed, denoting $f(y,s)= (T-t)^{\frac{2\alpha}{1+\alpha}} \bar p(y,s)=(Te^{-s})^{\frac{2\alpha}{1+\alpha}} \bar p(y,s)$,
then $f(y,s)$ and the vector-valued function $\nabla_y f(y,s)=:g(y,s)$ are periodic-in-$s$
functions with the period $S_0$. Thus from the fundamental theorem of calculus,
we deduce that
\begin{equation}\label{eq:Pysexp}
\begin{split}
  &(T-t)^{\frac{2\alpha}{1+\alpha}} \bar p(y,s) - (T-t)^{\frac{2\alpha}{1+\alpha}} \bar p(0,s)= \\
  =\, & \,f(y,s) -f(0,s)= \int_0^1 \frac{d}{d\tau} f(\tau y,s)\,\mathrm{d}\tau = \int_0^1 y\cdot \nabla f(\tau y,s)\mathrm{d}\tau \\
  =\, & \, \int_0^1 y\cdot g(\tau y,s) \,\mathrm{d}\tau =: P(y,s),
\end{split}
\end{equation}
that is,
\begin{equation}\label{eq:pres2}
  p(x,t)= \frac{1}{(T-t)^{\frac{2\alpha}{1+\alpha}}} P\big(y,s\big) + c(t),\quad \forall \,(x,t)\in \mathbb{R}^N\times ]-\infty,T[.
\end{equation}
with $c(t)= p(0,t)$ and $P(y,s)$ a periodic-in-$s$ function with period $S_0$.
Inserting \eqref{eq:pres2} into \eqref{eq:ueq2} yields the equation \eqref{eq:keyeq} on $\mathbb{R}^N\times \mathbb{R}$.
From the formula of $P(y,s)$ \eqref{eq:Pysexp}, we have $P(y,s)\in C^0_s C^2_{\textrm{loc}}(\mathbb{R}^{N+1})$.
Next we prove that $P(y,s)$ is a tempered distribution of $[0,S_0]\times\mathbb{R}^N$.
The proof is similar to that in \cite[Lemma 2.1]{BroS}, but we here sketch it for completeness.
Since we have the energy conservation of the original velocity and \eqref{eq:pexp},
we infer that $\|p(x,t)\|_{L^1_{\textrm{weak}}}\lesssim \|v(x,t)\|_{L^2}^2 \lesssim \|v_0\|_{L^2}^2 \lesssim 1$, which means that
$|\{x: |p(x,t)|>\lambda\}| \leq \frac{C}{\lambda}$ for all $t<T$. Thus there exists a small number $\eta >0$ so that
$|\{x: |p(x,t)|> \frac{1}{\eta}\frac{1}{ (T-t)^{N/(1+\alpha)}}\}|\leq \frac{|B_1(0)|}{2}(T-t)^{\frac{N}{1+\alpha}}$, which yields that there is a point $x_t$ in the ball
$\{x: |x|\leq (T-t)^{\frac{1}{1+\alpha}}\}$ so that $|p(x_t,t)|\leq \frac{1}{\eta}\frac{1}{ (T-t)^{N/(1+\alpha)}}$. Hence with $x_t$ and the corresponding $y_t=\frac{x_t}{(T-t)^{1/(1+\alpha)}}\in B_1(0)$ at our disposal,
we have
$$|c(t)|\leq (T-t)^{-\frac{2\alpha}{1+\alpha}} |P(y_t,s)| + \eta^{-1}(T-t)^{-\frac{N}{1+\alpha}} \lesssim (T-t)^{-\frac{2\alpha}{1+\alpha}} + (T-t)^{-\frac{N}{1+\alpha}}, $$
where we have used the fact that $|P(y_t,s)|\leq C$ from $P(y,s)\in C^0_s C^2_{\textrm{loc}}(\mathbb{R}^{N+1})$.
From \eqref{eq:pres2}, we see that
\begin{equation*}
  P(y,s)=(T-t)^{\frac{2\alpha}{1+\alpha}} p\big(y (T-t)^{\frac{1}{1+\alpha}},t\big) + (T-t)^{\frac{2\alpha}{1+\alpha}} c(t), \quad \forall (y,s)\in \mathbb{R}^N\times [0,S_0],
\end{equation*}
thus if $1\lesssim \sup_{s\in [0,S_0]}|V(y,s)|\lesssim |y|^\delta$, $\forall |y|\geq M$ for some $\delta\in [0,1[$, we get that for some $\tilde{p}\in]2,\infty[$,
\begin{equation}\label{eq:estim1}
\begin{split}
  & \int_{\frac{1}{4(T-t)^{1/(1+\alpha)}}\leq |y|\leq \frac{1}{2(T-t)^{1/(1+\alpha)}}}|P(y,s)|^{\frac{\tilde{p}}{2}}\,\mathrm{d}y\,\\
  \lesssim &\, 1+  (T-t)^{ \frac{(2\alpha-N)\tilde{p}/2}{1+\alpha}} + (T-t)^{\frac{\tilde{p}\alpha-N}{1+\alpha}}
  \int_{\frac{1}{4}\leq |x|\leq \frac{1}{2}}|p(x,t)|^{\frac{\tilde{p}}{2}}\,\mathrm{d}x\, \\
  \lesssim &\, 1+ (T-t)^{\frac{(2\alpha-N)\tilde{p}/2}{1+\alpha}} + (T-t)^{\frac{\tilde{p}\alpha-N}{1+\alpha}}
  \bigg(\int_{\frac{1}{8}\leq |x|\leq 1}|v(x,t)|^{\tilde{p}}\,\mathrm{d}x + \|v(t)\|_{L^2}^{\tilde{p}}\bigg) \\
  \lesssim & \,1+ (T-t)^{\frac{(2\alpha-N)\tilde{p}/2}{1+\alpha}} + (T-t)^{\frac{\tilde{p}\alpha-N}{1+\alpha}} +
  (T-t)^{\frac{\tilde{p}\alpha}{1+\alpha}}\int_{\frac{1}{8(T-t)^{1/(1+\alpha)}}\leq |y|\leq \frac{1}{(T-t)^{1/(1+\alpha)}}}|V(y,s)|^{\tilde{p}}\,\mathrm{d}y,
\end{split}
\end{equation}
which leads to that
\begin{equation}\label{eq:estim2}
  \sup_{s\in [0,S_0]}\bigg(\int_{\frac{1}{4(T-t)^{1/(1+\alpha)}}\leq |y|\leq \frac{1}{2(T-t)^{1/(1+\alpha)}}}|P(y,s)|^{\frac{\tilde{p}}{2}}\,\mathrm{d}y\bigg)\lesssim (T-t)^{-m_1}.
\end{equation}
In the above deduction of \eqref{eq:estim1} from the second line to the third line, we have used the decomposition that for $\frac{1}{4}\leq |x|\leq \frac{1}{2}$,
\begin{equation*}
\begin{split}
  p(x,t) & = -\frac{1}{N}|v(x,t)|^2 + \Big(\int_{|z|\leq \frac{1}{8}} + \int_{\frac{1}{8}\leq |z|\leq 1} +
  \int_{|z|\geq 1} \Big) \Big(K_{ij}(x-z)v_i(z,t)v_j(z,t)\,\mathrm{d}z\Big) \\
  & := -\frac{1}{N} |v(x,t)|^2 + p_1(x,t) + p_2(x,t) + p_3(x,t),
\end{split}
\end{equation*}
and the following estimates that
\begin{equation*}
\begin{split}
  \|p_1(x,t)\|_{L^\infty_x(\{\frac{1}{4}\leq |x|\leq \frac{1}{2} \})}\lesssim \|v(x,t)\|_{L^2_x}^2,  \\
  \int_{\frac{1}{4}\leq |x|\leq \frac{1}{2}}|p_2(x,t)|^{\frac{\tilde{p}}{2}}\,\mathrm{d}x \lesssim \int_{\frac{1}{8}\leq |x|\leq 1}|v(x,t)|^{\tilde{p}}\,\mathrm{d}x, \\
  \|p_3(x,t)\|_{L^\infty_x(\{\frac{1}{4}\leq |x|\leq \frac{1}{2} \})}\lesssim \int_{|z|\geq 1} \frac{1}{|z|^N} |v(z,t)|^2\,\mathrm{d}z \lesssim \|v(x,t)\|_{L^2_x}^2.
\end{split}
\end{equation*}
According to \eqref{eq:estim2}, we infer that $P(y,s)$ is a tempered distribution of $\mathbb{R}^{N+1}$.

Now we show that $P(y,s)$ and $I(y,s)$ are equal up to a first-order harmonic polynomial about the $y$-variable.
Since they both satisfy the Laplace equation $\Delta I =-\mathrm{div}\mathrm{div}(V\otimes V)= \Delta P$,
and are both tempered distributions on $\mathbb{R}^N\times \mathbb{R}$, the difference
\begin{equation}
  P(y,s) - I(y,s)=:h(y,s)
\end{equation}
is a harmonic polynomial about the $y$-variable, e.g. $h(y,s)$ may take the form
$$a(s)+b_i(s)y_i + c_{ij}(s) y_iy_j+d_{ijk}(s) y_i y_j y_k+\cdots,\qquad i,j,k=1,\cdots,N,$$
with the coefficients depending only on $s$.
Since $P(y,s)$ and $I(y,s)$ both belonging to $C^0_s C^2_{y,\textrm{loc}}(\mathbb{R}^{N+1})$ are periodic-in-$s$ with period $S_0$,
$h(y,s)$ is also a periodic function belonging to this functional space such that $h(y,s+S_0)=h(y,s)$ for all $s\in\mathbb{R}$.
In the following we prove that the order of $h(y,s)$ is at most one,
and in some case $h(y,s)$ is a function depending only on the $s$-variable.
For all $|y|\leq \frac{1}{2(T-t)^{\frac{1}{1+\alpha}}}$, from \eqref{eq:DSS1}, \eqref{eq:pexp}, and the change of variables, we see that
\begin{equation*}
\begin{split}
  &(T-t)^{\frac{2\alpha}{1+\alpha}} p\big(y (T-t)^{\frac{1}{1+\alpha}} , t\big)  = \\
  =\,& -\frac{1}{N} |V(y,s)|^2
  + \textrm{p.v.} \int_{|z|\leq 1} K_{ij}\big(y(T-t)^{\frac{1}{1+\alpha}}-z\big)\;(V_i V_j)
  \Big(\frac{z}{(T-t)^{\frac{1}{1+\alpha}}},s\Big)\mathrm{d}z \\
  & \,+ (T-t)^{\frac{2\alpha}{1+\alpha}} \int_{|z|\geq 1} K_{ij}\big(y(T-t)^{\frac{1}{1+\alpha}}-z\big)\;
  (v_i v_j)(z,t)\,\mathrm{d}z + d_1(t) \\
  = \, & -\frac{1}{N} |V(y,s)|^2
  + \textrm{p.v.} \int_{|z|\leq  (T-t)^{-\frac{1}{1+\alpha}}} K_{ij}(y-z)(V_i V_j)\big(z,s\big)\mathrm{d}z + \tilde{p}(y,t) + d_1(t), \\
  = \, &\,  I(y,s) -\tilde{I}(y,s) + \tilde{p}(y,t) + d_1(t),
\end{split}
\end{equation*}
with
\begin{equation*}
  \tilde{p}(y,t):= (T-t)^{\frac{2\alpha}{1+\alpha}} \int_{|z|\geq 1} K_{ij}\big(y(T-t)^{\frac{1}{1+\alpha}}-z\big)
  (v_i v_j)(z,t)\,\mathrm{d}z,
\end{equation*}
and
\begin{equation*}
  \tilde{I}(y,s,t) := \int_{|z|\geq (T-t)^{-\frac{1}{1+\alpha}}} K_{ij}(y-z) V_i(z,s) V_j(z,s)\,\mathrm{d}z + \bar P\big(y,s\big).
\end{equation*}
On the other hand, from \eqref{eq:pres2}, we also have
\begin{equation}\label{eq:fact10}
  (T-t)^{\frac{2\alpha}{1+\alpha}} p\big(y (T-t)^{\frac{1}{1+\alpha}}, t\big)= P(y,s) + d_2(t),
\end{equation}
with $d_2(t):=(T-t)^{\frac{2\alpha}{1+\alpha}} c(t)$;
hence we deduce
\begin{equation}
  h(y,s) + d_2(t)-d_1(t) = -\tilde{I}(y,s,t) + \tilde{p}(y,t), \quad \forall |y|\leq \frac{1}{2}(T-t)^{-\frac{1}{1+\alpha}},\forall s\in[0,S_0],
\end{equation}
which implies that
\begin{equation}\label{eq:key2}
  \Big|h(y,s)+ d_2(t) - d_1(t)\Big| \leq |\tilde{p}(y,t)| + \sup_{s\in [0,S_0]}\big|\tilde{I}(y,s,t)\big|, \qquad \forall |y|\leq \frac{1}{2}(T-t)^{-\frac{1}{1+\alpha}},\,\forall s\in [0,S_0].
\end{equation}
For $\tilde{p}$, from $|y(T-t)^{\frac{1}{1+\alpha}}-z|\geq |z|- |y(T-t)^{\frac{1}{1+\alpha}}|\geq 1/2$ for all $z\in B^c_1(0)$,
and the energy conservation of $v$, we directly obtain
\begin{equation}
  |\tilde{p}(y,t)|\lesssim (T-t)^{\frac{2\alpha}{1+\alpha}} \|v(t)\|_{L^2}^2\lesssim (T-t)^{\frac{2\alpha}{1+\alpha}}.
\end{equation}
For $\tilde{I}$,
if $1\lesssim \sup_{s\in [0,S_0]}|V(y,s)|\lesssim |y|^\delta$, $\forall |y|\geq M$ for some $\delta\in[0,1[$, similarly as the treating of \eqref{eq:I2key} and \eqref{eq:I2key2}, we get
\begin{equation}
  \sup_{s\in[0,S_0]}|\tilde{I}(y,s,t)| \lesssim (T-t)^{-\frac{2\delta}{1+\alpha}}, \quad  \forall |y|\leq \frac{1}{2}(T-t)^{-\frac{1}{1+\alpha}}.
\end{equation}
Since $\alpha>-1$, $\delta\in[0,1[$ and \eqref{eq:key2} holds for all $|y|\leq \frac{1}{2}(T-t)^{-\frac{1}{1+\alpha}}$ and $s\in [0,S_0]$,
we infer that the order of harmonic polynomial $h(y,s)$ is at most one.
In particular, if $\alpha>-1/2$, $\delta\in[0,1/2[$, we have that $h(y,s)$ is a function depending only on the $s$-variable.
\end{proof}

\section{Local energy inequality of the velocity profiles}\label{sec:locEne}

We start with the local energy equality of the original velocity
\begin{equation}\label{eq:EneE}
\begin{split}
  &\,\int_{\mathbb{R}^N} |v(x,t_2)|^2 \chi(x,t_2)\,\mathrm{d}x - \int_{\mathbb{R}^N} |v(x,t_1)|^2 \chi(x,t_1)\,\mathrm{d}x \\
  =&\,\int_{t_1}^{t_2}\int_{\mathbb{R}^N} |v(x,t)|^2 \partial_t \chi(x,t)\,\mathrm{d}x \mathrm{d}t\,
  +\,\int_{t_1}^{t_2}\int_{\mathbb{R}^N}\Big(|v|^2v + 2 \big(p-c(t)\big) v\Big)\cdot\nabla \chi(x,t)\,\mathrm{d}x \mathrm{d}t,
\end{split}
\end{equation}
with $-\infty<t_1<t_2<T$ and $\chi\in \mathcal{D}(\mathbb{R}^N\times ]-\infty,T[)$. The equality can hold if the velocity field is regular enough,
e.g. $v\in C^1_{\mathrm{loc}}(\mathbb{R}^N\times ]-\infty,T[)\cap L^\infty(]-\infty,T[; L^2(\mathbb{R}^N))$.

Let $\phi\in \mathcal{D}(\mathbb{R}^N)$ be a cutoff function supported on $B_1(0)$ such that $\phi\equiv 1$ on $B_{1/\lambda}(0)$
and $0\leq \phi\leq 1$ ($\lambda>1$ is just the DSS factor in \eqref{eq:condDSS}). Set $\chi(x,t)=\phi(x)$, then for any $t_1<t_2<T$, \eqref{eq:EneE} reduces to
\begin{equation}\label{eq:EneE2}
\begin{split}
  &\,\int_{\mathbb{R}^N} |v(x,t_2)|^2 \phi(x)\,\mathrm{d}x - \int_{\mathbb{R}^N} |v(x,t_1)|^2 \phi(x)\,\mathrm{d}x \\
  =&\,\int_{t_1}^{t_2}\int_{\mathbb{R}^N}\Big(|v|^2v + 2\big(p-c(t)\big)v\Big)(x,t)\cdot\nabla \phi(x) \,\mathrm{d}x \mathrm{d}t.
\end{split}
\end{equation}
Inserting the ansatz \eqref{eq:DSS1} into \eqref{eq:EneE2}, and denoting $s_2:=\log\frac{1}{T-t_2}$, $s_1:=\log\frac{1}{T-t_1}$,
we obtain that for any $-\infty<s_1<s_2<\infty$,
\begin{equation}\label{eq:EneE3}
\begin{split}
  & e^{s_2\frac{2\alpha-N}{1+\alpha}} \int_{\mathbb{R}^N}|V(y,s_2)|^2 \phi\big(y e^{-\frac{1}{1+\alpha}s_2}\big)\,\mathrm{d}y
  - e^{s_1 \frac{2\alpha-N}{1+\alpha}} \int_{\mathbb{R}^N} |V(y,s_1)|^2 \phi\big(y e^{-\frac{1}{1+\alpha}s_1}\big)\,\mathrm{d}y \\
  = \,& \int_{t_1}^{t_2}\int_{\mathbb{R}^N} \frac{1}{(T-t)^{\frac{3\alpha-N}{1+\alpha}}}
  \big(|V|^2V+2PV(y,s) \big)\cdot\nabla \phi\big(y(T-t)^{\frac{1}{1+\alpha}}\big)\,\mathrm{d}y\mathrm{d}t \\
  =\, & \int_{s_1}^{s_2}\int_{\mathbb{R}^N} e^{s \frac{2\alpha-N-1}{1+\alpha}}
  \big(|V|^2V+2PV(y,s)\big)\cdot\nabla\phi\big(ye^{-s\frac{1}{1+\alpha}}\big)\,\mathrm{d}y\mathrm{d}s.
\end{split}
\end{equation}
With no loss of generality, we assume that $s_1+S_0<s_2$ with $S_0$ the period. Let $\tau_1,\tau_2\in [0,S_0]$ be arbitrary,
and by replacing $s_i$ with $s_i+\tau_i$ in \eqref{eq:EneE3} (if $s_1+S_0\geq s_2$, we may use $s_1+\tau_1$ and $s_1+\tau_1+\tau_2$
to replace $s_1$ and $s_2$ respectively), we get
\begin{equation}\label{eq:EneE4}
\begin{split}
  & e^{(s_2+\tau_2)\frac{2\alpha-N}{1+\alpha}}\int_{\mathbb{R}^N}|V(y,s_2+\tau_2)|^2 \phi\big(y e^{-\frac{s_2+\tau_2}{1+\alpha}}\big)\,\mathrm{d}y
  -e^{(s_1+\tau_1)\frac{2\alpha-N}{1+\alpha}} \int_{\mathbb{R}^N} |V(y,s_1+\tau_1)|^2\phi\big(y e^{-\frac{s_1+\tau_1}{1+\alpha}}\big)\,\mathrm{d}y \\
  =\,& \int_{s_1+\tau_1}^{s_2+\tau_2}\int_{\mathbb{R}^N} e^{\frac{2\alpha-N-1}{1+\alpha}} \big(|V|^2V +2P V(y,s)\big)\cdot\nabla
  \phi\big(y e^{-s\frac{1}{1+\alpha}}\big) \,\mathrm{d}y\mathrm{d}s.
\end{split}
\end{equation}
For $i=1,2$, we set
\begin{equation}\label{eq:IJ}
\begin{split}
  &\, I_i := \int_0^{S_0} \int_{\mathbb{R}^N} e^{(s_i+\tau_i)\frac{2\alpha-N}{1+\alpha}} |V(y,s_i+\tau_i)|^2
  \phi\big(y e^{-\frac{s_i+\tau_i}{1+\alpha}}\big)\,\mathrm{d}y \mathrm{d}\tau_i, \\
  &\, J_i := \sup_{\tau_i\in [0,S_0]} \int_{\mathbb{R}^N} e^{(s_i+\tau_i)\frac{2\alpha-N}{1+\alpha}} |V(y,s_i+\tau_i)|^2
  \phi\big(y e^{-\frac{s_i +\tau_i }{1+\alpha}}\big)\,\mathrm{d}y .
\end{split}
\end{equation}
By the periodicity property of $V$, and denoting
\begin{equation}
  l_i:= e^{\frac{s_i}{1+\alpha}}\in ]0,\infty[, \quad \mu:= e^{\frac{S_0}{1+\alpha}},
\end{equation}
we directly see that for $i=1,2$,
\begin{equation}\label{eq:IJest}
\begin{split}
  c_\alpha l_i^{2\alpha-N} \int_0^{S_0} \int_{|y|\leq \frac{l_i}{\lambda}} |V(y,s)|^2\,\mathrm{d}y\mathrm{d}s \leq\,
  & I_i\leq C_\alpha l_i^{2\alpha-N}\int_0^{S_0} \int_{|y|\leq \mu l_i} |V(y,s)|^2 \,\mathrm{d}y\mathrm{d}s, \\
  c_\alpha l_i^{2\alpha-N} \sup_{s\in[0,S_0]} \int_{|y|\leq \frac{l_i}{\lambda}}|V(y,s)|^2\,\mathrm{d}y \leq\,
  & J_i \leq C_\alpha l_i^{2\alpha-N} \,\sup_{s\in [0,S_0]} \int_{|y|\leq \mu l_i} |V(y,s)|^2\,\mathrm{d}y,
\end{split}
\end{equation}
where $c_\alpha:=\min \{1,e^{S_0\frac{2\alpha-N}{1+\alpha}}\}$, $C_\alpha:=\max\{1,e^{S_0\frac{2\alpha-N}{1+\alpha}}\}$.
Taking the supremum over the $\tau_2$-variable and integrating on the $\tau_1$-variable in \eqref{eq:EneE4}, we obtain
\begin{equation*}
  |S_0 J_2 -I_1| \,\leq K_1
\end{equation*}
with
\begin{equation*}
  K_1\,:= \sup_{\tau_2\in[0,S_0]} \int_0^{S_0} \int_{s_1+\tau_1}^{s_2+\tau_2}\int_{ \mathbb{R}^N}e^{s\frac{2\alpha-N-1}{1+\alpha}}
  \big(|V|^3 + 2 |PV|(y,s)\big)
  \big|\nabla \phi\big( y e^{-\frac{s}{1+\alpha}}\big)\big|\,\mathrm{d}y\mathrm{d}s\mathrm{d}\tau_1.
\end{equation*}
Denoting by
\begin{equation}
  k_1:= [\log_\lambda (\mu l_2/l_1)],\quad B_k:=\{s:l_1 \lambda^k\leq e^{\frac{s }{1+\alpha}}\leq l_1\lambda^{k+1} \},
\end{equation}
and from the support property of $\phi$ and the periodicity property of $(V,P)$,
we infer that
\begin{equation}\label{eq:K1K2}
\begin{split}
  K_1 & \,\leq S_0 \int_{s_1}^{s_2+S_0} \int_{\mathbb{R}^N}
  e^{s\frac{2\alpha-N-1}{1+\alpha}}
  \big(|V|^3 +2 |PV|\big)(y,s)\, \big|\nabla\phi(y e^{-\frac{s}{1+\alpha}})\big| \,\mathrm{d}y\mathrm{d}s \\
  & \, \leq S_0 \sum_{k=0}^{k_1} \int_{s_1}^{s_2}\int_{\frac{1}{\lambda}e^{\frac{s}{1+\alpha}}\leq |y|\leq e^{\frac{s}{1+\alpha}}} 1_{B_k}(s) \, e^{s\frac{2\alpha-N-1}{1+\alpha}}
  \big(|V|^3 + 2 |PV|\big)(y,s)\, \big|\nabla \phi(y e^{-\frac{s}{1+\alpha}})\big|\,\mathrm{d}y\mathrm{d}s \\
  & \, \leq S_0 \sum_{k=0}^{k_1} \frac{1}{(l_1\lambda^k)^{N+1-2\alpha}}\int_{(1+\alpha)\log (l_1\lambda^k)}^{(1+\alpha)\log(l_1\lambda^{k+1})}
  \int_{l_1\lambda^{k-1}\leq |y|\leq l_1 \lambda^{k+1}} \frac{|V|^3 + 2|P| |V|}{|y|^{N+1-2\alpha}}
  \big|\nabla\phi(y e^{-\frac{s}{1+\alpha}})\big|\,\mathrm{d}y\mathrm{d}s \\
  & \, \leq \frac{C S_0 }{\lambda} \sum_{k=0}^{k_1}\frac{1}{(l_1 \lambda^k)^{N+1-2\alpha}}\int_0^{S_0} \int_{l_1\lambda^{k-1}\leq |y|\leq l_1\lambda^{k+1}} \big(|V|^3 + |P| |V|\big)(y,s) \,\mathrm{d}y\mathrm{d}s = : K_2,
\end{split}
\end{equation}
where in the last line we have used the fact that $|B_k|= (1+\alpha)\log\lambda =S_0$. On the other hand, we also get
\begin{equation*}
\begin{split}
  K_1 & \leq C S_0\int_{s_1}^{s_2+S_0}\int_{\frac{1}{\lambda}e^{\frac{s}{1+\alpha}}\leq |y|\leq e^{\frac{s}{1+\alpha}}}
  \frac{|V|^3 + |P| |V(y,s)|}{|y|}|\nabla\phi(y e^{-\frac{s}{1+\alpha}})|\,\mathrm{d}y\mathrm{d}s \\
  & \leq C S_0 \int_{s_1}^{s_2 +S_0} \int_{\frac{1}{\lambda}l_1\leq |y|\leq \mu l_2} 1_{\{s:\frac{1}{\lambda}|y|\leq e^{\frac{s}{1+\alpha}}\leq |y|\}}\frac{|V|^3 + |P| |V(y,s)|}{|y|}|\nabla \phi(y e^{-\frac{s}{1+\alpha}})|\,\mathrm{d}y\mathrm{d}s\\
  & \leq \frac{C S_0}{\lambda}\int_0^{S_0} \int_{\frac{1}{\lambda}l_1\leq |y|\leq \mu l_2} \frac{|V|^3 +|P||V(y,s)|}{|y|}\,\mathrm{d}y\mathrm{d}s
  =: K_3,
\end{split}
\end{equation*}
where the interval $\{s: |y|/\lambda\leq e^{\frac{s}{1+\alpha}}\leq |y|\}$ has the length $(1+\alpha)\log\lambda=S_0 $
and is just of a period. Hence we find
\begin{equation}\label{eq:locEne1}
  |S_0 J_2 -I_1| \leq C K_2,\quad\textrm{and}\quad |S_0J_2-I_1|\leq C K_3.
\end{equation}
Similarly, by using the different treating of $\tau_1,\tau_2$ in \eqref{eq:EneE4}, i.e. taking the supremum norm and $L^1$-norm on the $\tau_1,\tau_2$ variables in different order, we also have
\begin{equation}\label{eq:locEne2}
  |I_2 -I_1| + |I_2-S_0 J_1| + |J_2-J_1| \leq C K_2,
\end{equation}
and
\begin{equation}\label{eq:locEne3}
  |I_2-I_1| + |I_2-S_0 J_1| + |J_2-J_1|\leq C K_3.
\end{equation}

\section{Proof of Theorem \ref{thm:DSS}}\label{Sec:thm1}

\subsection{Proof of Theorem \ref{thm:DSS}-(1)}

 We here mainly focus on the case of $\alpha>\frac{N}{p}$, especially $\frac{N}{p}<\alpha \leq \frac{N}{2}$.
We start with the inequality \eqref{eq:locEne1}: $|S_0 J_2- I_1|\leq C K_2$ (or the inequality $|J_2-J_1|\leq C K_2$),
and by setting $l_1=\lambda$ and $l_2=\lambda L \gg 1$, we get
\begin{equation*}
\begin{split}
  & \, L^{2\alpha-N} \sup_{s\in[0,S_0]}\int_{|y|\leq L} |V(y,s)|^2\,\mathrm{d}y \\
  \lesssim\, &  \int_0^{S_0}\int_{|y|\leq \lambda\mu} |V(y,s)|^2\,\mathrm{d}y\mathrm{d}s
  + \sum_{k=0}^{[\log_\lambda(\mu L)]}\frac{1}{\lambda^{k(N+1-2\alpha)}}\int_0^{S_0} \int_{|y|\sim \lambda^k} \big(|V|^3 + |P| |V|\big)(y,s) \,\mathrm{d}y\mathrm{d}s.
\end{split}
\end{equation*}
It directly leads to
\begin{equation}
  \sup_{s\in[0,S_0]}\int_{|y|\leq L} |V(y,s)|^2\,\mathrm{d}y  \leq C L^{N-2\alpha} + C E(L),
\end{equation}
with
\begin{equation*}
  E(L):= L^{N-2\alpha} \sum_{k=0}^{[\log_\lambda (\mu L)]}\frac{1}{\lambda^{k(N+1-2\alpha)}}\int_0^{S_0} \int_{ |y|\sim \lambda^k} \big(|V|^3 + |P| |V|\big)(y,s)\,\mathrm{d}y\mathrm{d}s.
\end{equation*}
Thanks to H\"older's inequality and Lemma \ref{lem:pres1} below, we see that
\begin{equation}\label{eq:EL}
\begin{split}
  E(L) \lesssim & L^{N-2\alpha}\sum_{k=0}^{[\log_\lambda(\mu L)]} \frac{\lambda^{k(N-3N/p)}}{\lambda^{k(N+1-2\alpha)}}
  \int_0^{S_0}\bigg(\Big(\int_{|y|\sim \lambda^k} |V|^p\,\mathrm{d}y\Big)^{3/p} +
  \Big(\int_{|y|\sim \lambda^k}|P|^{\frac{p}{2}}\mathrm{d}y\Big)^{3/p}\bigg)\mathrm{d}s \\
  \lesssim & L^{N-2\alpha} \sum_{k=0}^{[\log_\lambda (\mu L)]} \frac{1}{\lambda^{k(N+1-2\alpha)}}\lambda^{k(N-3N/p)}
  \int_0^{S_0}\Big(\int_{|y|\lesssim \lambda^k}|V(y,s)|^p\,\mathrm{d}y\Big)^{3/p}\mathrm{d}s \\
  \lesssim & L^{N-2\alpha} \sum_{k=0}^{[\log_\lambda (\mu L)]} \lambda^{-k(1-2\alpha+\frac{3N}{p})}.
\end{split}
\end{equation}
If $1-2\alpha +\frac{3N}{p}\geq 0$, i.e. $2\alpha\leq 1+\frac{3N}{p}$, from the above estimate of $E(L)$ we get
\begin{equation*}
  \sup_{s\in[0,S_0]}\int_{|y|\leq L} |V(y,s)|^2\,\mathrm{d}y \leq C L^{N-2\alpha} [\log_\lambda L].
\end{equation*}
Otherwise, if $1-2\alpha+\frac{3N}{p}<0$, we obtain
\begin{equation}\label{eq:Vest1}
  \sup_{s\in [0,S_0]} \int_{|y|\leq L} |V(y,s)|^2\,\mathrm{d}y \leq C L^{\beta_p}, \qquad \textrm{with}\;\;\beta_p:=N-1-\frac{3N}{p}>0.
\end{equation}
Next we intend to improve the estimate \eqref{eq:Vest1} in this case. By interpolation and H\"older's inequality, we infer that
\begin{equation}\label{eq:V3Inter}
\begin{split}
  \int_0^{S_0}\int_{|y|\leq L} |V(y,s)|^3\,\mathrm{d}y\mathrm{d}s &
  \lesssim \int_0^{S_0}\Big(\int_{|y|\leq L} |V(y,s)|^2\,\mathrm{d}y\Big)^{\theta_p}
  \Big(\int_{|y|\leq L}|V(y,s)|^p\,\mathrm{d}y\Big)^{1-\theta_p} \,\mathrm{d}s \\
  & \lesssim \Big(\sup_{s\in[0,S_0]}\int_0^{S_0} |V(y,s)|^2\,\mathrm{d}y\Big)^{\theta_p} \Big(\int_0^{S_0 }\Big(\int_{|y|\leq L}
  |V(y,s)|^p\,\mathrm{d}y\Big)^{\frac{3}{p}}\,\mathrm{d}s\Big)^{\frac{p}{3(p-2)}} \\
  & \lesssim L^{\beta_p\theta_p},
\end{split}
\end{equation}
with $\theta_p:=\frac{p-3}{p-2}$. We use this estimate and Lemma \ref{lem:pres1} to improve the bound of $E(L)$ as follows
\begin{equation}\label{eq:EL2}
\begin{split}
  E(L) & \lesssim L^{N-2\alpha} \sum_{k=0}^{[\log_\lambda(\mu L)]}\frac{1}{\lambda^{N+1-2\alpha}}\int_0^{S_0}\Big(\int_{|y|\sim \lambda^k}|V|^3\,\mathrm{d}y
  + \int_{|y|\sim \lambda^k}|P|^{\frac{3}{2}}\,\mathrm{d}y\Big)\mathrm{d}s \\
  & \lesssim  L^{N-2\alpha} \sum_{k=0}^{[\log_\lambda (\mu L)]} \frac{1}{\lambda^{N+1-\alpha}} \int_0^{S_0}\int_{|y|\lesssim \lambda^k}
  |V(y,s)|^3\,\mathrm{d}y\mathrm{d}s \\
  & \lesssim L^{N-2\alpha}\sum_{k=0}^{[\log_\lambda (\mu L)]} \lambda^{-k (N+1-2\alpha- \beta_p \theta_p )}.
\end{split}
\end{equation}
If $N+1-2\alpha -\beta_p \theta_p \geq 0$, i.e. $N-2\alpha\geq \beta_p \theta_p-1$, then we directly get
\begin{equation*}
  \sup_{s\in [0,S_0]}\int_{|y|\leq L} |V(y,s)|^2\,\mathrm{d}y \lesssim L^{N-2\alpha} [\log_\lambda L].
\end{equation*}
Otherwise if $N+1-2\alpha -\beta_p <0$, then we obtain that
\begin{equation*}
  \sup_{s\in [0,S_0]} \int_{|y|\leq L} |V(y,s)|^2\,\mathrm{d}y \lesssim L^{\beta_p \theta_p -1},
\end{equation*}
and thus by interpolation,
\begin{equation*}
  \int_0^{S_0}\int_{|y|\leq L} |V(y,s)|^3\,\mathrm{d}y\mathrm{d}s \lesssim L^{\beta_p \theta_p^2 -\theta_p}.
\end{equation*}
The above process can be iteratively repeated in finite time, and for every $\alpha>\frac{N}{p}$,
there exists $n\in \mathbb{N}$ so that $N-2\alpha-(\beta_p\theta_p^n-\theta_p^{n-1} -\cdots-1) \geq 0$, and we have
\begin{equation}\label{eq:V2est1}
  \sup_{s\in[0,S_0]}\int_{|y|\leq L}|V(y,s)|^2\,\mathrm{d}y \lesssim L^{N-2\alpha} [\log_\lambda L].
\end{equation}
By passing $L$ to $\infty$, this already implies that $V\equiv 0$ for all $(y,s)\in \mathbb{R}^{N+1}$ at the case $\alpha>\frac{N}{2}$.

Now we remove the additional term $[\log_\lambda L]$ appearing in \eqref{eq:V2est1}.
Let $\epsilon\in ]0,1[$, then from \eqref{eq:V2est1} we deduce that
\begin{equation*}
  \sup_{s\in [0,S_0]} \int_{|y|\leq L} |V(y,s)|^2\,\mathrm{d}y \lesssim_\epsilon L^{N-2\alpha+\epsilon},
\end{equation*}
and by interpolation,
\begin{equation*}
  \int_0^{S_0} \int_{|y|\leq L} |V(y,s)|^3\,\mathrm{d}y\mathrm{d}s \lesssim L^{(N-2\alpha+\epsilon)\theta_p}.
\end{equation*}
Similarly as obtaining \eqref{eq:EL2} we have
\begin{equation*}
\begin{split}
  E(L) & \lesssim L^{(N-2\alpha)} \sum_{k=0}^{[\log_\lambda (\mu L)]} \lambda^{-k \big(N+ 1-2\alpha -(N-2\alpha+\epsilon)\theta_p\big)} \\
  & \lesssim L^{N-2\alpha} \sum_{k=0}^{[\log_\lambda (\mu L)]} \lambda^{-k(1-\epsilon)}\lesssim L^{N-2\alpha}.
\end{split}
\end{equation*}
Hence the desired estimate \eqref{eq:conc} is derived for any $\alpha\in ]\frac{N}{p},\frac{N}{2}]$.

Next we consider the statement \eqref{eq:conc2} for the case $\frac{N}{p}<\alpha <\frac{N}{2}$. In order to show \eqref{eq:conc2}
for the nontrivial velocity profile, it suffices to prove the following inequality
\begin{equation}\label{eq:V2targ}
  \frac{1}{L^{N-2\alpha}} \int_0^{S_0}\int_{|y|\leq L} |V(y,s)|^2\,\mathrm{d}y\mathrm{d}s \gtrsim 1,\quad \forall L\gg1.
\end{equation}
The method is by contradiction. Suppose \eqref{eq:V2targ} does not hold, then there is a sequence of numbers $L_n\gg 1$ such that as
$L_n\rightarrow \infty$, one has
\begin{equation*}
  \frac{1}{L_n^{N-2\alpha}} \int_0^{S_0}\int_{|y|\leq L_n} |V(y,s)|^2\,\mathrm{d}y\mathrm{d}s \rightarrow 0.
\end{equation*}
We shall use the local energy inequality $|I_2-S_0 J_1|\leq C K_2$, and by setting $l_2= L_n\rightarrow \infty$ and $l_1=\lambda L$, we get
\begin{equation}\label{eq:V2est2}
  \sup_{s\in [0,S_0]} \int_{|y|\leq L}|V(y,s)|^2\,\mathrm{d}y\lesssim L^{N-2\alpha}
  \sum_{k=0}^\infty \frac{1}{(\lambda^k L)^{N+1-2\alpha}}\int_0^{S_0}\int_{|y|\sim \lambda^k L} \big(|V|^3 + |P| |V|\big)(y,s)\,\mathrm{d}y\mathrm{d}s.
\end{equation}
Since we already have \eqref{eq:conc}, thanks to the inequality \eqref{eq:V3Inter}, we deduce
\begin{equation*}
  \int_0^{S_0}\int_{|y|\leq L} |V|^3\,\mathrm{d}y\mathrm{d}s\lesssim L^{(N-2\alpha)\theta_p},\quad \forall L\gg 1.
\end{equation*}
By virtue of Lemma \ref{lem:pres1} again (similar to obtaining \eqref{eq:EL2}), we find that
\begin{equation}\label{eq:V2est3}
  \sup_{s\in [0,S_0]} \int_{|y|\leq L} |V(y,s)|^2\,\mathrm{d}y\,\lesssim \frac{1}{L} \sum_{k=0}^\infty 2^{-k(N-2\alpha+1)} (2^k L)^{(N-2\alpha)\theta_p} \lesssim L^{(N-2\alpha)\theta_p -1}.
\end{equation}
By interpolation we further get
\begin{equation*}
  \int_0^{S_0}\int_{|y|\leq L} |V(y,s)|^3\,\mathrm{d}y\mathrm{d}s \lesssim L^{(N-2\alpha)\theta_p^2-\theta_p}.
\end{equation*}
Using this improved estimate in \eqref{eq:V2est2} we further obtain a more refined estimate than \eqref{eq:V2est3}. By repeating such iterative process,
after a finite $n$-times, we obtain
\begin{equation*}
  \sup_{s\in [0,S_0]}\int_{|y|\leq L} |V(y,s)|^2\,\mathrm{d}y \lesssim L^{(N-2\alpha)\theta_p^n -\theta_p^{n-1}-\cdots -1}.
\end{equation*}
For $n$ large enough, the power of $L$ becomes negative, which guarantees $\sup_{s\in [0,S_0]}\int_{\mathbb{R}^N} |V(y,s)|^2\,\mathrm{d}y\equiv0$,
and thus $V\equiv 0$ for all $(y,s)\in \mathbb{R}^{N+1}$.

For the case $-1<\alpha\leq N/p$, since the treating is similar to the obtaining of \eqref{eq:conc2} or that of the case $-1<\alpha<-\delta$ in the next section,
we omit the details and we only note that for all $-1<\alpha\leq N/p$,
\begin{equation*}
\begin{split}
  \frac{1}{l_2^{N-2\alpha}}\int_0^{S_0} &\int_{|y|\leq \mu l_2}|V|^2\,\mathrm{d}y\mathrm{d}s \leq \frac{1}{l_2^{N-2\alpha}}\int_0^{S_0}\int_{|y|\leq M}|V|^2\,\mathrm{d}y\mathrm{d}s +
  \frac{1}{l_2^{N-2\alpha}}\int_0^{S_0}\int_{M\leq |y|\leq \mu l_2} |V|^2\,\mathrm{d}y\mathrm{d}s \\
  & \lesssim \frac{ M^{N(1-\frac{2}{p})}}{l_2^{N-2\alpha}}\bigg(\int_0^{S_0} \Big(\int_{|y|\leq M}|V|^p \mathrm{d}y\Big)^{\frac{3}{p}}\mathrm{d}s\bigg)^{\frac{2}{3}} + l_2^{2(\alpha-\frac{N}{p})}\bigg(\int_0^{S_0}
  \Big(\int_{|y|\geq M}|V|^p \mathrm{d}y\Big)^{\frac{2}{p}}\mathrm{d}s\bigg)^{\frac{2}{3}} \\
  & \rightarrow 0, \quad\quad \textrm{as}\;\; l_2\rightarrow \infty, \;\; \textrm{and then}\;\; M\rightarrow \infty,
\end{split}
\end{equation*}
and at the first step of iteration
\begin{equation*}
\begin{split}
  \int_0^{S_0}\int_{|y|\sim \lambda^k L} \big(|V|^3+ |P| |V|\big)\,\mathrm{d}y\mathrm{d}s &\lesssim (\lambda^k L)^{N(1-\frac{3}{p})}\int_0^{S_0}\bigg(\Big(\int_{|y|\sim \lambda^k L}|V|^p\,\mathrm{d}y\Big)^{\frac{3}{p}} + \Big(\int_{|y|\sim \lambda^k L} |P|^{\frac{p}{2}}\,\mathrm{d}y\Big)^{\frac{3}{p}}\Big)\bigg) \\
  & \lesssim (\lambda^k L)^{N(1-\frac{2}{p})} \int_0^{S_0}\Big(\int_{|y|\lesssim \lambda^k L} |V|^p\,\mathrm{d}y\Big)^{\frac{3}{p}}\,\mathrm{d}s \lesssim (\lambda^k L)^{N(1-3/p)}.
\end{split}
\end{equation*}

\subsection{Proof of Theorem \ref{thm:DSS}-(2)}

We begin with the local energy inequality \eqref{eq:locEne3}: $|I_2-I_1|\leq C K_3$ at the $\alpha= N/2$ case,
and from \eqref{eq:IJ} and $c_\alpha=C_\alpha=1$ for $\alpha= N/2$, it also leads to
\begin{equation*}
  \int_0^{S_0}\int_{|y|\leq l_2/\lambda} |V|^2\,\mathrm{d}y\mathrm{d}s \leq \int_0^{S_0} \int_{|y|\leq \mu l_1}|V|^2\,\mathrm{d}y\mathrm{d}s + C
  \int_0^{S_0}\int_{ \frac{l_1}{\lambda}\leq |y|\leq \mu l_2} \frac{|V|^3 + |P| |V(y,s)|}{|y|}\,\mathrm{d}y\mathrm{d}s.
\end{equation*}
By letting $\mu l_1=L$ and $l_2=\lambda^2 L$, we get
\begin{equation}
\begin{split}
  \int_0^{S_0}\int_{L\leq |y|\leq\lambda L} |V(y,s)|^2\,\mathrm{d}y \mathrm{d}s & \leq \frac{C}{L} \int_0^{S_0}
  \int_{\frac{L}{\mu\lambda} \leq |y|\leq \mu \lambda^2 L} \big(|V|^3 + |P| |V|\big)(y,s)\,\mathrm{d}y\mathrm{d}s \\
  & \leq \frac{C}{L}\int_0^{S_0}\int_{ \frac{L}{\lambda^{\nu+1}} \leq |y|\leq \lambda^{\nu+2} L}
  \big(|V|^3 + |P| |V(y,s)|\big)\,\mathrm{d}y\mathrm{d}s
\end{split}
\end{equation}
with $\nu:=[\log_\lambda \mu]+1$.
From \eqref{eq:cond3}, we see that
\begin{equation*}
  \int_0^{S_0}\int_{L\leq |y|\leq\lambda L}|V|^2\,\mathrm{d}y\mathrm{d}s \leq \frac{C}{L^{1-\delta}}\int_0^{S_0}\int_{\frac{L}{\lambda^{\nu+1}}\leq |y|\leq \lambda^{\nu+2}L}|V|^2\,\mathrm{d}y\mathrm{d}s
  + \frac{C}{L}\int_0^{S_0}\int_{\frac{L}{\lambda^{\nu+1}}\leq |y|\leq \lambda^{\nu+2}L} |P| |V|\,\mathrm{d}y\mathrm{d}s.
\end{equation*}
In order to treat the term involving $P$, we make the following decomposition
\begin{equation*}
\begin{split}
  P(y,s) = & \, -\frac{|V(y,s)|^2}{N} + \mathrm{p.v.}\int_{|z|\leq \frac{L}{\lambda^{\nu+2}}}K_{ij}(y-z)V_i(z,s)V_j(z,s)\,\mathrm{d}z \\
  & + \int_{\frac{L}{\lambda^{\nu+2}}\leq|z|\leq 2^{\nu+3}L} K_{ij}(y-z)V_i(z,s)V_j(z,s)\,\mathrm{d}z
  +\int_{|z|\geq \lambda^{\nu+3}L}K_{ij}(y-z)V_i(z,s)V_j(z,s)\,\mathrm{d}z \\
  :=&\, P_{1,L}(y,s) + P_{2,L}(y,s) + P_{3,L}(y,s) + P_{4,L}(y,s).
\end{split}
\end{equation*}
The treating of the term containing $P_{1,L}$ is obvious:
\begin{equation*}
  \frac{1}{L}\int_0^{S_0}\int_{\frac{L}{\lambda^{\nu+1}}\leq |y|\leq\lambda^{\nu+2}L}|P_{1,L}||V|\,\mathrm{d}y\mathrm{d}s
  \leq \frac{C}{L^{1-\delta}}\int_0^{S_0}\int_{\frac{L}{\lambda^{\nu+1}}\leq |y|\leq \lambda^{\nu+2}L} |V(y,s)|^2\,\mathrm{d}y\mathrm{d}s.
\end{equation*}
For $P_{2,L}$, from the support property,
we infer that for every $|y|\geq \frac{L}{2^{\nu+1}}$,
\begin{equation*}
  |P_{2,L}(y,s)|\leq \frac{C}{L^N} \int_{|z|\leq \frac{L}{\lambda^{\nu+2}}}|V(z,s)|^2\,\mathrm{d}z\mathrm{d}s
  \leq \frac{C \|V\|^2_{L^2_s L^2_y}}{L^N},
\end{equation*}
and thus by the H\"older inequality we obtain
\begin{equation*}
\begin{split}
  \frac{1}{L}\int_0^{S_0}\int_{\frac{L}{\lambda^{\nu+1}}\leq |y|\leq \lambda^{\nu+2}L}|P_{2,L}| |V| \,\mathrm{d}y\mathrm{d}s & \leq \frac{C}{L^{N+1}}
  \int_{\frac{L}{\lambda^{\nu+1}}\leq |y|\leq \lambda^{\nu+2}L} |V|\,\mathrm{d}y\mathrm{d}s \\
  & \leq \frac{C}{L^{N/2+1}} \Big(\int_{\frac{L}{\lambda^{\nu +1}}\leq |y|\leq \lambda^{\nu+2}L}|V|^2\,\mathrm{d}y\mathrm{d}s\Big)^{\frac{1}{2}}.
\end{split}
\end{equation*}
For $P_{3,L}$, taking advantage of the Calder\'on-Zygmund theorem and \eqref{eq:cond3} again, we find that
\begin{equation*}
\begin{split}
  & \frac{1}{L}\int_0^{S_0}\int_{\frac{L}{\lambda^{\nu+1}} \leq |y|\leq \lambda^{\nu+2} L} |P_{3,L}| |V|\,\mathrm{d}y\mathrm{d}s \\
  \leq\, & \frac{1}{L}\Big(\int_0^{S_0}\int_{\frac{L}{\lambda^{\nu+1}}\leq |y|\leq \lambda^{\nu+2}L}|V|^2\,\mathrm{d}y\mathrm{d}s\Big)^{\frac{1}{2}}
  \Big(\int_0^{S_0} \int_{\frac{L}{\lambda^{\nu+1}}\leq |y|\leq \lambda^{\nu+2}L}|P_3|^2\,\mathrm{d}y\mathrm{d}s\Big)^{\frac{1}{2}} \\
  \leq \, & \frac{C}{L}\Big(\int_0^{S_0}\int_{\frac{L}{\lambda^{\nu+1}}\leq |y|\leq \lambda^{\nu+2}L}|V|^2\,\mathrm{d}y\mathrm{d}s\Big)^{\frac{1}{2}}
  \Big(\int_0^{S_0}\int_{\frac{L}{\lambda^{\nu+2}}\leq |y|\leq \lambda^{\nu+3}L}|V|^4\,\mathrm{d}y\mathrm{d}s\Big)^{\frac{1}{2}} \\
  \leq \, & \frac{C}{L^{1-\delta}}\int_0^{S_0}\int_{\frac{L}{\lambda^{\nu+2}}\leq |y|\leq \lambda^{\nu+3}L}|V|^2\,\mathrm{d}y\mathrm{d}s.
\end{split}
\end{equation*}
By virtue of the dyadic decomposition and \eqref{eq:cond3}, we estimate the term containing $P_{4,L}$ as follows
\begin{equation*}
\begin{split}
  & \frac{1}{L} \int_0^{S_0}\int_{\frac{L}{\lambda^{\nu+1}}\leq |y|\leq \lambda^{\nu+2}L}
  |P_{4,L}| |V|\,\mathrm{d}y\mathrm{d}s \\
  \leq\, & L^{N-1+\delta}
  \int_0^{S_0}\sup_{|y|\leq \lambda^{\nu+2}L}|P_{4,L}(y,s)|\,\mathrm{d}s \\
  \leq\, & C L^{N-1+\delta}\int_0^{S_0} \sup_{|y|\leq \lambda^{\nu+2}L} \Big(\sum_{k=\nu+3}^\infty\int_{\lambda^k L\leq |z|\leq \lambda^k L}
  \frac{1}{|y-z|^N}|V(z,s)|^2\,\mathrm{d}z\Big)\mathrm{d}s \\
  \leq\, & \frac{C}{L^{1-\delta}} \sum_{k=\nu+3}^\infty \frac{1}{\lambda^{Nk}} \int_0^{S_0}\int_{\lambda^k L\leq |z|\leq \lambda^{k+1}L}
  |V(z,s)|^2\,\mathrm{d}z\mathrm{d}s.
\end{split}
\end{equation*}
Gathering the above estimates leads to
\begin{equation}\label{eq:V2key}
\begin{split}
  \int_0^{S_0}\int_{L\leq |y|\leq 2L} |V|^2\,\mathrm{d}y\mathrm{d}s \leq & \frac{C}{L^{N/2+1}}\sum_{j=-\nu-1}^{\nu+2}\Big(\int_0^{S_0}
  \int_{\lambda^j L\leq |y|\leq \lambda^{j+1}L}|V|^2\,\mathrm{d}y\mathrm{d}s\Big)^{\frac{1}{2}} \\
  & + \frac{C}{L^{1-\delta}}\sum_{k=-\nu-2}^\infty \frac{1}{\lambda^{N k }}\int_0^{S_0}
  \int_{\lambda^k L\leq |y|\leq \lambda^{k+1}L} |V|^2 \,\mathrm{d}y\mathrm{d}s.
\end{split}
\end{equation}
By denoting $A_k= A_k(L):= \int_0^{S_0}\int_{\lambda^k L\leq |y|\leq \lambda^{k+1} L} |V|^2\,\mathrm{d}y\mathrm{d}s$ for every $k\in\mathbb{Z}$, we rewrite \eqref{eq:V2key} as
\begin{equation}\label{eq:A0}
  A_0\leq \frac{C}{L^{N/2+1}} \sum_{j=-\nu-1}^{\nu+2} A_j^{1/2} + \frac{C}{L^{1-\delta}} \sum_{k=-\nu-2}^\infty \frac{1}{\lambda^{Nk}} A_k,
\end{equation}
which also ensures that for every $i\in\mathbb{Z}$,
\begin{equation}\label{eq:Ai}
  A_i \leq \frac{C}{(\lambda^i L)^{N/2+1}} \sum_{j=-\nu-1}^{\nu+2} A_{i+j}^{1/2} + \frac{C}{L^{1-\delta}} \sum_{k=-\nu-2}^\infty \frac{1}{\lambda^{Nk}}
  A_{i+k}.
\end{equation}
Using \eqref{eq:Ai} in estimating the righthand side of \eqref{eq:A0}, we get
\begin{equation*}
\begin{split}
  A_0 \leq & \frac{C}{L^{N/2+1}}\sum_{j_1=-\nu-1}^{\nu+2} \bigg(\frac{C}{(\lambda^{j_1}L)^{(N/2+1)/2}}\sum_{j_2=-\nu-1}^{\nu+2} A_{j_1+j_2}^{1/4} + \frac{C}{L^{(1-\delta)/2}}
  \sum_{k_2=-\nu-2}^\infty \frac{1}{\lambda^{Nk_2/2}} A_{j_1+k_2}^{1/2}\bigg) \\
  & \,+ \frac{C}{L^{1-\delta}} \sum_{k_1=-\nu-2}^\infty\frac{1}{\lambda^{Nk_1}} \bigg(\frac{C}{(\lambda^{k_1}L)^{N/2+1}}\sum_{j_2=-\nu-1}^{\nu+2}A_{k_1+j_2}^{1/2} + \frac{1}{L^{1-\delta}}\sum_{k_2=-\nu-2}^\infty
  \frac{1}{\lambda^{Nk_2}} A_{k_1+k_2}\bigg) \\
  \leq & \frac{C}{L^{(N/2+1)(1+1/2)}} \sum_{J_1,J_2=-\nu-1}^{\nu+2} A_{j_1+j_2}^{1/4} + \frac{C}{L^{N/2+1+(1-\delta)/2}} \sum_{j_1=-\nu-1}^{\nu+2}\sum_{k_2=-\nu-2}^\infty\frac{1}{\lambda^{Nk_2/2}} A_{j_1+k_2}^{1/2} \\
  & \,+ \frac{C}{L^{N/2+2-\delta}} \sum_{k_1=-\nu-2}^\infty \sum_{j_2=-\nu-1}^{\nu+2} \frac{1}{\lambda^{ k_1 N}} A_{k_1+j_2}^{1/2} + \frac{C}{L^{2(1-\delta)}}\sum_{k_1,k_2=-\nu-2}^\infty \frac{1}{\lambda^{N(k_1+k_2)}}
  A_{k_1+k_2}.
\end{split}
\end{equation*}
By repeating this process for $n$-times, we obtain
\begin{equation*}
\begin{split}
  A_0 \leq & \frac{C}{L^{(N/2+1)(1+\cdots+1/2^n)}}\sum_{j_1,\cdots,j_n=-\nu-1}^{\nu+2} A_{j_1+\cdots+j_n}^{1/2^{n+1}} \\
  & \, + \frac{C}{L^{(N/2+1)(1+\cdots+ 2^{n-1})+ (1-\delta)/2^n}} \sum_{j_1,\cdots,j_{n-1}=-\nu-1}^{\nu+2}\sum_{k_n=-\nu-2}^\infty \frac{1}{\lambda^{Nk_2/2^n}} A_{j_1+\cdots+ j_{n-1}+k_n}^{1/2^n} \\
  & \, + \cdots + \frac{C}{L^{N/2+1+n(1-\delta)}}\sum_{k_1,\cdots,k_{n-1}=-\nu-2}^\infty\sum_{j_n=-\nu-1}^{\nu+2}\frac{1}{\lambda^{(k_1+\cdots+k_{n-1})N}} A_{k_1+\cdots+k_{n-1}+j_n}^{1/2} \\
  & \, + \frac{C}{L^{(n+1)(1-\delta)}} \sum_{k_1,\cdots,k_n=-\nu-2}^\infty \frac{1}{\lambda^{(k_1+\cdots+k_n)N}} A_{k_1+\cdots+k_n}.
\end{split}
\end{equation*}
For every small $\epsilon>0$, due to $A_k \leq C$ for all $k\in \mathbb{Z}$, we can let $n$ large enough so that
\begin{equation*}
  A_0(L)\leq \frac{C}{L^{(N/2+1)(1+\cdots+2^n)}} +\cdots + \frac{C}{L^{(N/2+1)(1+\cdots+1/2^{m-1})+ (n+1-m)(1-\delta)/2^m}}+\cdots+\frac{C}{L^{(n+1)(1-\delta)}} \leq \frac{C}{L^{N+2-\epsilon}}.
\end{equation*}
This concludes the proof of \eqref{eq:conc3}.

\section{Proof of Theorem \ref{thm:DSS2}}\label{sec:Thm2}

\subsection{Proof of Theorem \ref{thm:DSS2}-(1)}

Since by a simple deduction in the introduction section we already have \eqref{eq:keyest1} for all $\alpha>-1$, that is,
\begin{equation}\label{eq:keyest2}
  \sup_{s\in [0,S_0]} \int_{|y|\leq L}|V(y,s)|^2\,\mathrm{d}y \lesssim L^{N-2\delta},\qquad \forall L\gg 1,
\end{equation}
we infer that the only possible scope of $\alpha$ to admit nontrivial velocity profiles is $-1<\alpha\leq -\epsilon_0$,
which can be seen from \eqref{eq:keyest2} and the following fact deduced by the assumption \eqref{eq:Vcond}:
\begin{equation}\label{eq:fact0}
  \sup_{s\in[0,S_0]} \int_{|y|\leq L} |V(y,s)|^2 \,\mathrm{d}y \geq \int_{M\leq |y|\leq L} |y|^{2\epsilon_0}\,\mathrm{d}y \gtrsim L^{N+2\epsilon_0},
\end{equation}
with $M>0$ a large number so that \eqref{eq:Vcond} holds for all $|y|\geq M$. We remark that by starting from $|J_2-J_1|\leq C K_2$ and in a similar way as the treating in
the corresponding part of \cite{Xue-SS}, we can also prove \eqref{eq:keyest2} in the same style as conducted in the main proof (noting that the assumption \eqref{eq:Vcond} is still necessary),
but we here omit the details for simplicity.

Next we consider the case $-1<\alpha<-\delta$, and we begin with the local energy inequality
$|J_2-J_1|\leq C K_2$. Thanks to \eqref{eq:Vcond}, we see that for all $\alpha\in ]-1,-\delta[$,
\begin{equation*}
\begin{split}
  \frac{1}{l_2^{N-2\alpha}}\int_0^{S_0} \int_{|y|\leq \mu l_2} |V(y,s)|^2\,\mathrm{d}y\mathrm{d}s & \lesssim l_2^{-N+2\alpha}
  \int_0^{S_0}\int_{ |y|\leq\mu l_2} |y|^{2\delta}\,\mathrm{d}y\mathrm{d}s\\
  & \lesssim l_2^{2\delta + 2\alpha}\rightarrow 0,\quad\quad\quad \textrm{as}\;\; l_2\rightarrow \infty,
\end{split}
\end{equation*}
thus by letting $l_1=\lambda L\gg1$ and $l_2\rightarrow \infty$ and using \eqref{eq:IJest}, we have
\begin{equation}\label{eq:key1}
  \sup_{s\in [0,S_0]}\int_{|y|\leq L} |V(y,s)|^2\,\mathrm{d}y
  \leq C L^{N-2\alpha} \sum_{k=0}^\infty \frac{1}{(\lambda^k L)^{N+1-2\alpha}}\int_0^{S_0}\int_{\lambda^k L\leq |y|\leq \lambda^{k+2} L}
  \big(|V|^3 + |P| |V|\big)\,\mathrm{d}y\mathrm{d}s,
\end{equation}
where $P$ is given by \eqref{eq:Pys0}. 
Taking advantage of the following rough estimate deduced from \eqref{eq:Vcond},
\begin{equation}\label{eq:Ves-rou}
  \sup_{s\in [0,S_0]}\int_{|y|\leq \lambda^{k+2}L}|V(y,s)|^2\,\mathrm{d}y\lesssim
  (\lambda^k L)^{N+2\delta},\quad \textrm{with}\;\;\delta\in ]0,1[,
\end{equation}
and by using \eqref{eq:estq2} in Lemma \ref{lem:pres2} below, we have
\begin{equation*}
\begin{split}
  \int_0^{S_0}\int_{|y|\leq \lambda^{k+2}L} |V(y,s)| |P(y,s)| \,\mathrm{d}y \mathrm{d}s & \lesssim
  \begin{cases}
    (\lambda^k L)^{N+3\delta} + (\lambda^k L)^{N+\delta +1},\quad &\textrm{if}\;\; \delta\neq\frac{1}{2}, \\
    (\lambda^k L)^{N+\frac{3}{2}}[\log_2 (\lambda^k L)],\quad &\textrm{if}\;\; \delta=\frac{1}{2},
  \end{cases} \\
  & \lesssim
  \begin{cases}
  (\lambda^k L)^{N+3\delta},\quad &\textrm{if   }\delta> \frac{1}{2}, \\
  (\lambda^k L)^{N+\frac{3}{2}+\epsilon}, \quad & \textrm{if   }\delta=\frac{1}{2},\\
  (\lambda^k L)^{N+\delta+1} ,\quad &\textrm{if    } \delta<\frac{1}{2},
  \end{cases}
\end{split}
\end{equation*}
with $0<\epsilon\ll 1/2$ a small number. Thus for all $-1<\alpha<-\delta$, we first obtain a bound which is better than \eqref{eq:Ves-rou}:
\begin{equation}\label{eq:Ves-ref}
\begin{split}
  \sup_{s\in [0,S_0]}\int_{|y|\leq L} |V(y,s)|^2 \mathrm{d}y & \leq
  \begin{cases}
    \frac{C}{L} \sum_{k=0}^\infty \lambda^{-k(N-2\alpha+1)} (\lambda^k L)^{\max\{N+3\delta,N+1+\delta \}},\quad& \textrm{if}\;\; \delta\neq \frac{1}{2}, \\
    \frac{C}{L} \sum_{k=0}^\infty \lambda^{-k(N-2\alpha+1)} (\lambda^k L)^{N+\frac{3}{2}+\epsilon},\quad& \textrm{if}\;\; \delta= \frac{1}{2}, \\
  \end{cases} \\
  & \leq
  \begin{cases}
  C L^{N+3\delta-1},\quad &\textrm{if  }\delta\in ]\frac{1}{2},1[, \\
  C L^{N+\delta+\epsilon} ,\quad &\textrm{if   } \delta\in ]0,\frac{1}{2}].
  \end{cases}
\end{split}
\end{equation}
We next shall use \eqref{eq:Ves-ref} to show a more refined estimate. By using \eqref{eq:estq2} in Lemma \ref{lem:pres2} again, and noting that
\begin{equation}\label{eq:fact1}
  \max\{b+\delta, (N+b)/2+1\}=
  \begin{cases}
    b+\delta,\quad & \textrm{if    }b\geq N+2(1-\delta), \\
    \frac{N+b}{2}+1,\quad & \textrm{if   }b<N+2(1-\delta),
  \end{cases}
\end{equation}
we get
\begin{equation}\label{eq:est4}
  \int_0^{S_0}\int_{|y|\leq \lambda^{k+2}L} |V(y,s)| |P(y,s)|\,\mathrm{d}y \mathrm{d}s \lesssim
  \begin{cases}
  (\lambda^k L)^{N+4\delta-1},\quad &\textrm{if  }\delta\in [\frac{3}{5},1[, \\
  (\lambda^k L)^{N+\frac{3\delta+1}{2}},\quad &\textrm{if  }\delta\in ] \frac{1}{2}, \frac{3}{5}], \\
  (\lambda^k L)^{N+\frac{\delta+\epsilon}{2}+1},\quad &\textrm{if  }\delta\in ]0, \frac{1}{2}],
  \end{cases}
\end{equation}
Plugging it into \eqref{eq:key1}, we have
\begin{equation}\label{eq:G2}
\begin{split}
  \sup_{s\in [0,S_0]}\int_{|y|\leq L} |V(y,s)|^2 \,\mathrm{d}y  & \leq
  \begin{cases}
  \frac{C}{L}\sum_{k=0}^\infty \lambda^{-k(N-2\alpha+1)}  (\lambda^k L)^{N+4\delta-1}, \quad &\textrm{if  }\delta\in [\frac{3}{5},1[,\\
  \frac{C}{L}\sum_{k=0}^\infty \lambda^{-k(N-2\alpha+1)}  (\lambda^k L)^{N+\frac{3\delta+1}{2}},\quad &\textrm{if  }\delta\in ] \frac{1}{2}, \frac{3}{5}], \\
  \frac{C}{L}\sum_{k=0}^\infty \lambda^{-k(N-2\alpha+1)}  (\lambda^k L)^{N+\frac{\delta+\epsilon}{2}+1},\quad &\textrm{if  }\delta\in ]0, \frac{1}{2}],
  \end{cases} \\
  & \leq
  \begin{cases}
   C L^{N+4\delta-2},\quad &\textrm{if  }\delta\in [\frac{3}{5},1[, \\
   C L^{N+\frac{3\delta-1}{2}},\quad &\textrm{if  }\delta\in ] \frac{1}{2}, \frac{3}{5}], \\
   C L^{N+\frac{\delta+\epsilon}{2}},\quad &\textrm{if  }\delta\in ]0, \frac{1}{2}].
  \end{cases}
\end{split}
\end{equation}
We can repeat the above process for $n+1$ times to show that
\begin{equation}\label{eq:keyeq3}
  \sup_{s\in[0,S_0]}\int_{|y|\leq L} |V(y,s)|^2\,\mathrm{d}y \leq
  \begin{cases}
    C L^{N+2\delta+ (n+1)(\delta-1)},\quad & \textrm{if   }\delta\in [\frac{n+2}{n+4},1[, \\
    C L^{N+\frac{2\delta+ n(\delta-1)}{2}},\quad &\textrm{if   }\delta\in [\frac{n+1}{n+3}, \frac{n+2}{n+4}], \\
    C L^{N+\frac{2\delta + (n-1)(\delta-1)}{2^2}},\quad & \textrm{if   }\delta\in [\frac{n}{n+2},\frac{n+1}{n+3}],\\
    \cdots\quad \cdots \\
    C L^{N+ \frac{2\delta + (\delta-1)}{2^n}},\quad & \textrm{if   }\delta\in ]\frac{1}{2},\frac{3}{5}], \\
    C L^{N + \frac{\delta+\epsilon}{2^n}},\quad & \textrm{if   }\delta\in ]0,\frac{1}{2}].
  \end{cases}
\end{equation}
For each $\delta\in]0,\frac{1}{2}]$, and for $n$ sufficiently large, we get that the power of $L$ is less than $N+\epsilon_0$ for $\epsilon_0>0$
($\epsilon_0$ is the number appearing in \eqref{eq:Vcond});
while for each $\delta\in ]\frac{1}{2},1[$, there is some $m\in \mathbb{N}^+$ so that $\delta\in ]\frac{m+1}{m+3},\frac{m+2}{m+4}]$, thus after repeating the above process for
$m+n+1$ times, we get
\begin{equation*}
  \sup_{s\in [0,S_0]}\int_{|y|\leq L}|V(y,s)|^2 \,\mathrm{d}y\leq C L^{N+ \frac{2\delta + m(\delta-1)}{2^{n+1}}},\quad \textrm{for  }\delta\in \big]\frac{m+1}{m+3},\frac{m+2}{m+4}\big],
\end{equation*}
and for $n$ large enough, we infer that the power of $L$ is also less than $N+\epsilon_0$.
But this obviously contradicts with the estimation \eqref{eq:fact0} deduced from the condition \eqref{eq:Vcond},
which means there is no possibility to admit nontrivial velocity profiles in the case $-1<\alpha<-\delta$.

Now we prove \eqref{eq:Vconc}, and for this purpose, it suffices to prove the following inequality for all $-\delta\leq \alpha\leq -\epsilon_0$,
\begin{equation}\label{eq:V2est5}
  \frac{1}{L^{N-2\alpha}}\sup_{s\in[0,S_0]} \bigg(\int_{|y|\leq L}|V(y,s)|^2\,\mathrm{d}y\bigg) \gtrsim 1,\quad \forall L\gg1.
\end{equation}
Suppose \eqref{eq:V2est5} is not correct, then necessarily there exists a sequence of numbers $L_k\gg1$ such that
\begin{equation}
  \frac{1}{L_k^{N-2\alpha}}\sup_{s\in [0,S_0]}\bigg(\int_{|y|\leq L_k} |V(y,s)|^2\,\mathrm{d}y\bigg) \rightarrow 0,\quad \textrm{as}\quad L_k\rightarrow \infty.
\end{equation}
We also start from the local energy inequality $|J_2 -J_1|\leq C K_2$, and by letting $l_2=L_k\rightarrow\infty$ and $l_1=\lambda L$, we have
\begin{equation}\label{eq:V2key3}
\begin{split}
  \sup_{s\in[0,S_0]}\int_{|y|\leq L} |V(y,s)|^2\,\mathrm{d}y  \leq C L^{N-2\alpha} \sum_{k=0}^\infty
  \frac{1}{(\lambda^k L)^{N+1-2\alpha}}\int_0^{S_0}\int_{\lambda^k L\leq |y|\leq \lambda^{k+2} L}
  \big(|V|^3 + |P| |V|\big) \,\mathrm{d}y\mathrm{d}s,
\end{split}
\end{equation}
which is exactly the same as \eqref{eq:key1}.
Since we already have \eqref{eq:keyest2}, and by using \eqref{eq:estq2} in Lemma \ref{lem:pres2} with $b=N-2\alpha$, we have
\begin{equation*}
\begin{split}
  \sup_{s\in [0,S_0]}\int_{|y|\leq L} |V(y,s)|^2\,\mathrm{d}y  & \leq
  \begin{cases}
  \frac{C}{L} \sum_{k=0}^\infty \frac{1}{\lambda^{k (N-2\alpha + 1)}} (\lambda^k L)^{\max\{N-2\alpha +\delta,N-\alpha+1\}},
  \quad& \textrm{if}\;\; \alpha\neq -\frac{1}{2},\delta\neq \frac{1}{2}, \\
  \frac{C}{L} \sum_{k=0}^\infty \frac{1}{\lambda^{k (N-2\alpha + 1)}} (\lambda^k L)^{\frac{3}{2}}[\log_2 (\lambda^k L)],
  \quad &\textrm{if}\;\; \alpha= -\frac{1}{2},\delta= \frac{1}{2},
  \end{cases} \\
  & \leq
  \begin{cases}
  C L^{N-2\alpha+\delta-1}, \quad & \textrm{if   } \alpha\in [-\delta,\delta-1], \delta\in ]\frac{1}{2},1[, \\
  C L^{N+\frac{1}{2}+\epsilon},\quad & \textrm{if   }  \alpha=-\frac{1}{2}, \delta=\frac{1}{2}, \\
  C L^{N-\alpha},\quad & \textrm{if   }  \alpha\in [ \delta-1,-\frac{\epsilon_0}{2}], \delta\in [\epsilon_0,1- \epsilon_0],(\alpha,\delta)\neq (-\frac{1}{2},\frac{1}{2})
  \end{cases}
\end{split}
\end{equation*}
with $0<\epsilon\ll1/2$ a small number.
Using this improved estimate and Lemma \ref{lem:pres2} again, similarly as above we find
\begin{equation*}
\begin{split}
  \sup_{s\in [0,S_0]}\int_{|y|\leq L} |V(y,s)|^2\,\mathrm{d}y  & \leq
  \begin{cases}
    \frac{C}{L} \sum_{k=0}^\infty \frac{1}{\lambda^{k (N-2\alpha + 1)}} (\lambda^k L)^{N-2\alpha +2\delta-1},
    \quad &\textrm{if   }\alpha\in [-\delta,\frac{3}{2}(\delta-1)],\delta\in [\frac{3}{5},1[, \\
    \frac{C}{L} \sum_{k=0}^\infty \frac{1}{\lambda^{k (N-2\alpha + 1)}} (\lambda^k L)^{N-\alpha +\frac{\delta-1}{2} +1},
    \quad &\textrm{if   }\alpha\in [\frac{3}{2}(\delta-1),\delta-1],\delta\in ]\frac{1}{2},1[, \\
    \frac{C}{L} \sum_{k=0}^\infty \frac{1}{\lambda^{k (N-2\alpha + 1)}} (\lambda^k L)^{N+\frac{-\alpha+\epsilon}{2} +1},
    \quad &\textrm{if   }\alpha\in [\delta-1,-\frac{\epsilon_0}{2}],\delta\in [\epsilon_0,1-\epsilon_0], \\
  \end{cases} \\
  & \leq
  \begin{cases}
    L^{N-2\alpha +2\delta-2},\quad &\textrm{if   }\alpha\in [-\delta,\frac{3}{2}(\delta-1)],\delta\in [\frac{3}{5},1[, \\
    L^{N-\alpha +\frac{\delta-1}{2}},\quad &\textrm{if   }\alpha\in [\frac{3}{2}(\delta-1),\delta-1],\delta\in ]\frac{1}{2},1[, \\
    L^{N+\frac{-\alpha+\epsilon}{2}},\quad &\textrm{if   }\alpha\in [\delta-1,-\epsilon_0],\delta\in [\epsilon_0,1-\epsilon_0], \\
  \end{cases}
\end{split}
\end{equation*}
By repeating the above process for $n+1$ times leads to
\begin{equation}\label{eq:estim4}
\begin{split}
  \sup_{s\in [0,S_0]}\int_{|y|\leq L} |V|^2\,\mathrm{d}y
  \lesssim
  \begin{cases}
    L^{N-2\alpha +(n+1)(\delta-1)},\quad &\textrm{if   }\alpha\in [-\delta,\frac{n+2}{2}(\delta-1)],\delta\in [\frac{n+2}{n+4},1[, \\
    L^{N-\alpha +\frac{n}{2}(\delta-1)},\quad &\textrm{if   }\alpha\in [\frac{n+2}{2}(\delta-1),\frac{n+1}{2}(\delta-1)],\delta\in [\frac{n+1}{n+3},1[, \\
    \cdots\quad \cdots \\
    L^{N-\frac{\alpha}{2^{n-1}} +\frac{1}{2^n}(\delta-1)},\quad &\textrm{if   }\alpha\in [\frac{3}{2}(\delta-1),(\delta-1)],\delta\in ]\frac{1}{2},1[, \\
    L^{N+\frac{-\alpha+\epsilon}{2^n}},\quad &\textrm{if   }\alpha\in [\delta-1,-\epsilon_0],\delta\in [\epsilon_0,1-\epsilon_0]. \\
  \end{cases}
\end{split}
\end{equation}
From \eqref{eq:estim4}, we claim that for all $\alpha\in [-\delta,-\epsilon_0]$ and $\delta\in [\epsilon_0,1[$,
\begin{equation}\label{eq:claim}
  \sup_{s\in [0,S_0]}\int_{|y|\leq L} |V(y,s)|^2 \,\mathrm{d}y \lesssim L^{N+\epsilon_0},\quad \forall L\gg1.
\end{equation}
Indeed, we divide into three cases:
if $\delta\in [\epsilon_0,\frac{1}{2}[$, then the scope $[-\delta,-\epsilon_0]\subset [\delta-1,-\epsilon_0]$, and thus for $n$ large enough, we get \eqref{eq:claim} for all $-\delta\leq \alpha \leq -\epsilon_0$;
if $\delta\in [\frac{n+1}{n+3},\frac{n+2}{n+4}[$ for some $n\in \mathbb{N^+}$ and $\delta\leq 1-\epsilon_0$, then $-\delta>\frac{n+2}{2}(\delta-1)$,
and $\alpha\in[-\delta,-\epsilon_0]\subset [\frac{n+2}{2}(\delta-1),\frac{n+1}{2}(\delta-1)]\cup\cdots \cup [\frac{3}{2}(\delta-1),\delta-1]\cup [\delta-1,\epsilon_0]$,
thus after repeating the above process for $m+n+1$ times, we get for all $-\delta\leq \alpha\leq -\epsilon_0$,
\begin{equation}\label{eq:keyest3}
\begin{split}
  \sup_{s\in [0,S_0]}\int_{|y|\leq L} |V(y,s)|^2\,\mathrm{d}y
  & \lesssim
  \begin{cases}
    L^{N+ \frac{-\alpha}{2^{m}} +\frac{n}{2^{m+1}}(\delta-1)},\quad &\textrm{if   }\alpha\in [\frac{n+2}{2}(\delta-1),\frac{n+1}{2}(\delta-1)], \\
    \cdots\quad \cdots \\
    L^{N+\frac{-\alpha}{2^{m+n-1}} +\frac{1}{2^{m+n}}(\delta-1)},\quad &\textrm{if   }\alpha\in [\frac{3}{2}(\delta-1),(\delta-1)], \\
    L^{N+\frac{-\alpha+\epsilon}{2^{m+n}}},\quad &\textrm{if   }\alpha\in [\delta-1,-\epsilon_0],
  \end{cases} \\
  & \lesssim L^{N+\epsilon_0}, \qquad \forall L\gg1,
\end{split}
\end{equation}
where in the second line we have chosen $m$ large enough; finally, if $\delta\in [\frac{n+1}{n+3},\frac{n+2}{n+4}[$ for some $n\in \mathbb{N^+}$ and $\delta> 1-\epsilon_0$,
then $-\delta>\frac{n+2}{2}(\delta-1)$, $\delta-1>-\epsilon_0$, and
$\alpha\in[-\delta,-\epsilon_0]\subset [\frac{n+2}{2}(\delta-1),\frac{n+1}{2}(\delta-1)]\cup\cdots \cup [\frac{3}{2}(\delta-1),\delta-1]$,
we can obtain \eqref{eq:claim} similarly as getting \eqref{eq:keyest3} for all $-\delta\leq \alpha\leq-\epsilon_0$.
However, the estimate \eqref{eq:claim} clearly contradicts with \eqref{eq:fact0},
and thus the assumption \eqref{eq:V2est5} is not compatible with the condition \eqref{eq:Vcond}, and the desired estimate \eqref{eq:Vconc} is followed.

\subsection{Proof of Theorem \ref{thm:DSS2}-(2)}
Since $\alpha>-\frac{1}{2}$ and $\delta<\frac{1}{2}$ in \eqref{eq:Vcond2}, we have $A(s)\equiv 0$ in the representation formula of $P$ \eqref{eq:Pys0},
and we can use the better estimate \eqref{eq:estq3} instead of \eqref{eq:estq2} in the main proof.
First we also have \eqref{eq:keyest2} for all $\alpha>-1$, and in combination with the condition \eqref{eq:Vcond2}, we infer that the only possible range of $\alpha$
to admit nontrivial velocity profiles is $\{\alpha:-1<\alpha \leq 0\}$, since we need that
\begin{equation}\label{eq:fact3}
L^N\lesssim \sup_{s\in [0,S_0]}\int_{|y|\leq L} |V(y,s)|^2\,\mathrm{d}y  \lesssim L^{N-2\alpha},\quad \forall L\gg1.
\end{equation}
Next we consider the case $-1<\alpha<-\delta$. Similarly as above, we also begin with \eqref{eq:key1}, and by virtue of \eqref{eq:Ves-rou} and \eqref{eq:estq3} in Lemma \ref{lem:pres2} below, we get
\begin{equation*}
  \int_0^{S_0}\int_{|y|\leq \lambda^{k+2} L} |P(y,s)| |V(y,s)|\,\mathrm{d}y \mathrm{d}s \lesssim (\lambda^k L)^{N+3\delta},
\end{equation*}
and
\begin{equation*}
  \sup_{s\in [0,S_0]}\int_{|y|\leq L}|V(y,s)|^2\,\mathrm{d}y\leq \frac{C}{L}\sum_{k=0}^\infty \frac{1}{\lambda^{k(N-2\alpha+1)}} (\lambda^k L)^{N+3\delta}\leq C L^{N+3\delta-1}.
\end{equation*}
We can repeatedly use this process to show that
\begin{equation*}
  \sup_{s\in[0,S_0]}\int_{|y|\leq L} |V(y,s)|^2\,\mathrm{d}y \leq C L^{N+2\delta - (n+1)(1-\delta)},
\end{equation*}
as long as $N+2\delta -n(1-\delta) \geq N-2\delta$, that is, $n\leq \frac{4\delta}{1-\delta}$. Set $n_0 = [\frac{4\delta}{1-\delta}]$, then we obtain
\begin{equation*}
  \sup_{s\in [0,S_0]}\int_{|y|\leq L} |V(y,s)|^2\,\mathrm{d}y \leq C L^{N+2\delta - (n_0+1)(1-\delta)}\leq C L^{N-2\delta},
\end{equation*}
which clearly contradicts with the lower bound in \eqref{eq:fact3},
and means that the case $-1<\alpha<\delta$ is not compatible.

In the end for the nontrivial velocity profiles corresponding to each $\alpha\in [-\delta,-\epsilon_0]$, we prove \eqref{eq:Vconc}, and it suffices to prove \eqref{eq:V2est5} for all $\alpha$ in this range.
Similarly as above, we begin with \eqref{eq:V2key3} to get
\begin{equation*}
  \sup_{s\in [0,S_0]}\int_{|y|\leq L}|V(y,s)|^2\,\mathrm{d}y \leq \frac{C}{L}\sum_{k=0}^\infty \frac{1}{\lambda^{k(N-2\alpha+1)}} (\lambda^k L)^{N-2\alpha +\delta}\leq C L^{N-2\alpha+\delta-1}.
\end{equation*}
By iteration, we can show that, as long as $N+2\alpha-n(1-\delta)\geq N-2\delta$,
\begin{equation*}
  \sup_{s\in [0,S_0]}\int_{|y|\leq L} |V(y,s)|^2\,\mathrm{d}y \leq C L^{N-2\alpha- (n+1)(1-\delta)}.
\end{equation*}
Set $n_0'=[\frac{2\alpha+2\delta}{1-\delta}]$, thus we find
\begin{equation*}
  \sup_{s\in [0,S_0]}\int_{|y|\leq L} |V(y,s)|^2\, \mathrm{d}y \leq C L^{N-2\alpha -(n'_0+1)(1-\delta)} \leq C L^{N-2\delta},
\end{equation*}
which contradicts with the lower bound in \eqref{eq:fact3}, and thus proves \eqref{eq:V2est5} and \eqref{eq:Vconc} for every $-\delta\leq\alpha\leq 0$.

\section{Auxiliary lemmas: estimation of the pressure profile}\label{sec:lem}

\begin{lemma}\label{lem:pres1}
  Suppose that $V\in C^1_s C^1_{y,\mathrm{loc}}(\mathbb{R}^{N+1})$ is a locally periodic-in-$s$
vector field with period $S_0>0$, which additionally satisfies that for every $L\gg1$, $2<p<\infty$ and $2\leq r\leq\infty$,
\begin{equation*}
  \Big\|\Big(\int_{|y|\leq L}|V(y,s)|^p\,\mathrm{d}y\Big)^{\frac{1}{p}}\Big\|_{L^r([0,S_0])}\lesssim L^{\frac{a}{p}},\quad \textrm{with}\quad
  0\leq a <N.
\end{equation*}
Let $P(y,s)$ be a scalar-valued function defined from $V$ by
\begin{equation}\label{eq:Pys2}
  P(y,s):= c_0 |V(y,s)|^2 + \mathrm{p.v.} \int_{\mathbb{R}^N} K_{ij}(y-z) V_i(z,s) V_j(z,s)\,\mathrm{d}z
\end{equation}
with $c_0\in\mathbb{R}$ and $K_{ij}(z)$ ($i,j=1,\cdots,N$) some Calder\'on-Zygmund kernel, then we have
\begin{equation}\label{eq:Pest}
  \Big\|\Big(\int_{|y|\leq L} |P(y,s)|^{\frac{p}{2}}\,\mathrm{d}y\Big)^{\frac{2}{p}}\Big\|_{L^{\frac{r}{2}}([0,S_0])} \lesssim L^{\frac{2a}{p}}.
\end{equation}

\end{lemma}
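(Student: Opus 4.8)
The plan is to split $P$ according to a truncation at the spatial scale $L$. Fix the cutoff $\phi_{4L}(z)=\phi_0(z/4L)$ with $\phi_0\in\mathcal{D}(\mathbb{R}^N)$ as in Section~\ref{sec:pres} (so that $\phi_{4L}\equiv1$ on $B_{2L}(0)$ and $\mathrm{supp}\,\phi_{4L}\subset B_{4L}(0)$), and write $P=P_0+P_1+P_2$, where $P_0(y,s)=c_0|V(y,s)|^2$, $P_1(y,s)=\mathrm{p.v.}\int_{\mathbb{R}^N}K_{ij}(y-z)\phi_{4L}(z)V_iV_j(z,s)\,\mathrm{d}z$, and $P_2(y,s)=\int_{\mathbb{R}^N}K_{ij}(y-z)\big(1-\phi_{4L}(z)\big)V_iV_j(z,s)\,\mathrm{d}z$. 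For each piece I will bound its $L^{p/2}(B_L)$-norm in $y$ and then take the $L^{r/2}([0,S_0])$-norm in $s$; note that $p/2\in(1,\infty)$ and $r/2\in[1,\infty]$, which is exactly what the Calder\'on-Zygmund theorem and Minkowski's inequality require below. The $C^1_s C^1_{y,\mathrm{loc}}$-regularity of $V$ makes the principal value defining $P_1$ meaningful near the diagonal, while the far-field decay established in the last step makes $P_2$ absolutely convergent, so that \eqref{eq:Pys2} is well defined.

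For the local term, $\|P_0(\cdot,s)\|_{L^{p/2}(B_L)}=|c_0|\,\|V(\cdot,s)\|_{L^p(B_L)}^2$, so by the hypothesis
\[
\Big\|\,\|P_0(\cdot,s)\|_{L^{p/2}(B_L)}\,\Big\|_{L^{r/2}([0,S_0])}=|c_0|\,\Big\|\,\|V(\cdot,s)\|_{L^p(B_L)}\,\Big\|_{L^r([0,S_0])}^{2}\lesssim L^{2a/p}.
\]
For $P_1$, the Calder\'on-Zygmund operator $f\mapsto\mathrm{p.v.}\int_{\mathbb{R}^N}K_{ij}(\cdot-z)f(z)\,\mathrm{d}z$ is bounded on $L^{p/2}(\mathbb{R}^N)$, hence $\|P_1(\cdot,s)\|_{L^{p/2}(B_L)}\le\|P_1(\cdot,s)\|_{L^{p/2}(\mathbb{R}^N)}\lesssim\|\phi_{4L}V_iV_j(\cdot,s)\|_{L^{p/2}(\mathbb{R}^N)}\lesssim\|V(\cdot,s)\|_{L^p(B_{4L})}^{2}$; taking the $L^{r/2}$-norm in $s$ and using the hypothesis with $4L$ in place of $L$ again yields the bound $L^{2a/p}$.

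The only step requiring a little more care is $P_2$. Since $\phi_{4L}\equiv1$ on $B_{2L}(0)$, for $y\in B_L(0)$ and $z$ in the support of $1-\phi_{4L}$ one has $|z|\ge2L$ and $|y-z|\ge|z|-|y|\ge|z|/2$, so $|K_{ij}(y-z)|\lesssim|z|^{-N}$ uniformly in such $y,z$; consequently $|P_2(y,s)|\lesssim g(s):=\int_{|z|\ge2L}|z|^{-N}|V(z,s)|^2\,\mathrm{d}z$ for every $y\in B_L(0)$, whence $\|P_2(\cdot,s)\|_{L^{p/2}(B_L)}\lesssim L^{2N/p}\,g(s)$. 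Decomposing $\{|z|\ge2L\}$ dyadically into annuli $\{2^kL\le|z|<2^{k+1}L\}$, $k\ge1$, and using H\"older's inequality in $y$ to pass from the $L^2$- to the $L^p$-norm of $V$ on each annulus, one finds $g(s)\lesssim\sum_{k\ge1}(2^kL)^{-2N/p}\|V(\cdot,s)\|_{L^p(B_{2^{k+1}L})}^{2}$. Then Minkowski's inequality in $L^{r/2}([0,S_0])$ (valid since $r/2\ge1$) and the hypothesis give $\|g\|_{L^{r/2}}\lesssim\sum_{k\ge1}(2^kL)^{-2N/p}(2^{k+1}L)^{2a/p}\lesssim L^{2(a-N)/p}\sum_{k\ge1}2^{2k(a-N)/p}$, and the geometric series converges precisely because $a<N$, so $\|g\|_{L^{r/2}}\lesssim L^{2(a-N)/p}$; multiplying by $L^{2N/p}$ gives $\big\|\,\|P_2(\cdot,s)\|_{L^{p/2}(B_L)}\,\big\|_{L^{r/2}}\lesssim L^{2a/p}$. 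Summing the three contributions yields \eqref{eq:Pest}. The main --- and essentially the only --- genuinely structural point is this far-field estimate: it is the assumption $a<N$ that makes the dyadic sum summable, and without it the conclusion would need a correction of the type appearing in Lemma~\ref{lem:VP}; everything else is routine Calder\'on-Zygmund theory combined with H\"older's and Minkowski's inequalities.
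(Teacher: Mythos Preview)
Your proof is correct and follows essentially the same approach as the paper: both split the singular integral into a near-field piece (handled by the $L^{p/2}$-boundedness of the Calder\'on--Zygmund operator) and a far-field piece (handled pointwise via $|K_{ij}(y-z)|\lesssim|z|^{-N}$, a dyadic decomposition, H\"older to pass to $L^p$, Minkowski in $s$, and the geometric sum using $a<N$). The only cosmetic difference is that you use the smooth cutoff $\phi_{4L}$ while the paper uses the sharp cutoff at $|z|=2L$, and you treat $c_0|V|^2$ as a separate term whereas the paper simply remarks that it suffices to estimate the integral part.
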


\begin{proof}[Proof of Lemma \ref{lem:pres1}]
We only suffice to treat the integral term in the expression formula \eqref{eq:Pys2}, denoting by $\tilde{P}(y,s)$,
and we use the following decomposition
\begin{equation*}
\begin{split}
  \tilde{P}(y,s)& \,= \mathrm{p.v.} \int_{|z|\leq 2L} K_{ij}(y-z) V_i(z,s) V_j(z,s)\,\mathrm{d}z
  + \int_{|z|\geq 2L} K_{ij}(y-z) V_i(z,s) V_j(z,s)\,\mathrm{d}z \\
  & \,:= \tilde{P}_{1,L}(y,s) + \tilde{P}_{2,L}(y,s).
\end{split}
\end{equation*}
By the Calder\'on-Zygmund theorem, we first see that
\begin{equation*}
  \Big\|\Big(\int_{|y|\leq L} |\tilde{P}_{1,L}(y,s)|^{\frac{p}{2}}\,\mathrm{d}y\Big)^{\frac{2}{p}}\Big\|_{L^{r/2}_s}
  \lesssim \Big\|\Big(\int_{|y|\leq 2L} |V(y,s)|^p\,\mathrm{d}y\Big)^{1/p} \Big\|_{L^r_s}^2 \lesssim L^{\frac{2a}{p}}.
\end{equation*}
For $\tilde{P}_{2,L}$, by the dyadic decomposition, Minkowski's inequality and H\"older's inequality we have
\begin{equation*}
\begin{split}
  \Big\|\Big(\int_{|y|\leq L} |\tilde{P}_{2,L}(y,s)|^{\frac{p}{2}}\,\mathrm{d}y\Big)^{\frac{2}{p}}\Big\|_{L^{r/2}_s}
  & \lesssim \Big\|\Big(\int_{|y|\leq L}\Big(\sum_{k=1}^\infty\int_{2^k L\leq |z|\leq 2^{k+1}L}\frac{1}{|y-z|^N}
  |V(z,s)|^2\,\mathrm{d}z\Big)^{\frac{p}{2}}\,\mathrm{d}y\Big)^{\frac{2}{p}}\Big\|_{L^{r/2}_s} \\
  & \lesssim \sum_{k=1}^\infty \Big\|\Big(\int_{|y|\leq L}\Big(\int_{|z|\sim 2^k L}\frac{1}{|z|^N}
  |V(z,s)|^2\,\mathrm{d}z\Big)^{\frac{p}{2}}\,\mathrm{d}y\Big)^{\frac{2}{p}}\Big\|_{L^{r/2}_s} \\
  & \lesssim L^{\frac{2N}{p}} \sum_{k=1}^\infty (2^k L)^{-N}\Big\|\int_{|z|\sim 2^k L}
  |V(z,s)|^2\,\mathrm{d}z\Big\|_{L^{r/2}_s} \\
  & \lesssim L^{\frac{2N}{p}} \sum_{k=1}^\infty (2^k L)^{-2N/p } \Big\|\Big(\int_{|z|\sim 2^k L}
  |V(z,s)|^p\,\mathrm{d}z\Big)^{\frac{2}{p}}\Big\|_{L^{r/2}_s} \\
  & \lesssim L^{\frac{2N}{p}}\sum_{k=1}^\infty(2^k L)^{-\frac{2(N-a)}{p}} \lesssim L^{\frac{2a}{p}}.
\end{split}
\end{equation*}
Hence gathering the above estimates yields \eqref{eq:Pest}.

\end{proof}

\begin{lemma}\label{lem:pres2}
  Assume that $V\in C^1_s C^3_{y,\mathrm{loc}}(\mathbb{R}^{N+1};\mathbb{R}^N)$ is a periodic-in-$s$ vector field with period $S_0$, and additionally
$V$ satisfies that
\begin{equation}\label{eq:lemasum}
\begin{split}
  \sup_{s\in [0,S_0]} |V(y,s)| \lesssim |y|^\delta,\;\;\forall |y|\geq M, \quad&\textrm{with}\quad 0\leq \delta<1\quad\textrm{and}\\
  \sup_{s\in[0,S_0]} \int_{|y|\leq L} |V(y,s)|^2\,\mathrm{d}y \lesssim L^b,\;\; \forall L\geq M,\quad &\textrm{with}\quad 0\leq b\leq  N+2\delta,
\end{split}
\end{equation}
with $M>0$ a fixed number.
Let $Q(y,s)$ be a scalar field defined from $V(y,s)$ by that
\begin{equation}
\begin{split}
  Q(y,s)= &\, c_0 |V(y,s)|^2 + A(s)\cdot y + p.v. \int_{\mathbb{R}^N} K_{ij}(y-z) V_i(z,s) V_j(z,s)\,\mathrm{d}z\, + \\
  & \,+
  \begin{cases}
  -\int_{|z|\geq M} K_{ij}(z) V_i(z,s) V_j(z,s)\,\mathrm{d}z, \quad& \textrm{if  }\delta\in [0,1/2[, \\
  -\int_{|z|\geq M} \big(K_{ij}(z)+y\cdot\nabla K_{ij}(z)\big) V_i(z,s) V_j(z,s)\,\mathrm{d}z, \quad & \textrm{if  }\delta\in[1/2,1[,
  \end{cases}
\end{split}
\end{equation}
where $c_0\in\mathbb{R}$, $A(s)\in C(\mathbb{R};\mathbb{R}^N)$ is a periodic-in-$s$ function with period $S_0$ and $K_{ij}(z)$ ($i,j=1,\cdots,N$) is a Calder\'on-Zygmund type kernel, then we have
\begin{equation}\label{eq:estq2}
  \int_0^{S_0}\int_{|y|\leq L} |Q(y,s)| |V(y,s)| \,\mathrm{d}y \mathrm{d}s \lesssim
  \begin{cases}
    L^{b+\delta} +  L^{\frac{N+b}{2}+1},\quad & \textrm{if}\;\; (b,\delta)\neq (N+1,\frac{1}{2}),\\
    L^{\frac{N+3}{2}}[\log_2 L], \quad &\textrm{if}\;\; (b,\delta)=(N+1,\frac{1}{2}).
  \end{cases}
\end{equation}
In particular, if $\delta\in [0,\frac{1}{2}[$ in \eqref{eq:lemasum} and $A(s)\equiv 0$, we also have
\begin{equation}\label{eq:estq3}
  \int_0^{S_0}\int_{|y|\leq L} |Q(y,s)| |V(y,s)| \,\mathrm{d}y \mathrm{d}s \lesssim
  \begin{cases}
    L^{b+\delta},\quad & \textrm{if}\;\; b\geq N-2\delta,(b,\delta)\neq (N,0), \\
    L^N [\log_2 L],\quad & \textrm{if}\;\; (b,\delta)=(N,0), \\
    L^{\frac{N+b}{2}} ,\quad & \textrm{if}\;\; b\leq N-2\delta,(b,\delta)\neq (N,0). \\
  \end{cases}
\end{equation}
\end{lemma}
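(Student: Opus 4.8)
The plan is to decompose $Q$ into its four natural pieces — the local quadratic term $c_0|V|^2$, the linear term $A(s)\cdot y$, the principal-value singular integral, and the correction term from \eqref{eq:Pbar} — and to estimate the contribution of each to $\int_0^{S_0}\!\int_{|y|\le L}|Q|\,|V|\,\mathrm{d}y\,\mathrm{d}s$. The essential structural point, already visible in Lemma~\ref{lem:VP}, is that under \eqref{eq:lemasum} the principal-value integral and the $\bar P$-type correction diverge separately and must be kept together; so, fixing $L$ and using the smooth cutoff $\phi_{4L}$ from the proof of Lemma~\ref{lem:VP}, I would split their sum as $Q_{1,L}(y,s)+Q_{2,L}(y,s)$, with $Q_{1,L}(y,s)=\mathrm{p.v.}\int K_{ij}(y-z)\phi_{4L}(z)V_iV_j(z,s)\,\mathrm{d}z$ a Calder\'on--Zygmund operator applied to the spatially truncated tensor $V\otimes V$, and $Q_{2,L}$ the far-field remainder, which — after absorbing $\bar P$ and invoking the kernel identities \eqref{eq:I2Ldec1} (when $\delta<\tfrac12$) or \eqref{eq:I2Ldec2} (when $\delta\in[\tfrac12,1[$) — becomes a genuinely convergent integral over $\{|z|\ge M\}$. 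The manipulations legitimizing the cutoff and the principal value are exactly those carried out for $I_{1,L},I_{2,L}$ in Section~\ref{sec:pres} and rely on $V\in C^3_{y,\mathrm{loc}}$.

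Throughout, the tool is a dyadic decomposition of $\{|y|\le L\}$ and of the $z$-integrals, combined on each shell $\{|z|\sim 2^j\}$ with the pointwise bound $|V(z,s)|\lesssim 2^{j\delta}$ and the $L^2$-bound $\int_{|z|\lesssim 2^j}|V(z,s)|^2\lesssim 2^{jb}$ from \eqref{eq:lemasum}; the $s$-integral over one period costs only a constant. This gives at once: for the quadratic term, $\int_0^{S_0}\!\int_{|y|\le L}|V|^3\lesssim\sum_{2^j\lesssim L}2^{j(\delta+b)}\lesssim L^{b+\delta}$; for the linear term, since $A$ is continuous and periodic, hence bounded, $\int_{|y|\le L}|y|\,|V|\lesssim L\,|B_L|^{1/2}\big(\int_{|y|\le L}|V|^2\big)^{1/2}\lesssim L^{\frac{N+b}{2}+1}$, a term that disappears when $A\equiv 0$; for $Q_{1,L}$, the $L^2$-boundedness of the Calder\'on--Zygmund operator gives $\|Q_{1,L}(\cdot,s)\|_{L^2}\lesssim \||V|^2\phi_{4L}\|_{L^2}=\|V(\cdot,s)\|_{L^4(|z|\le 4L)}^2\lesssim L^{\delta+b/2}$ (again by the dyadic splitting, using the pointwise decay of $V$ to control the $L^4$-mass by the $L^2$-mass on each shell), whence $\int_0^{S_0}\!\int_{|y|\le L}|Q_{1,L}|\,|V|\lesssim L^{\delta+b/2}\cdot L^{b/2}\lesssim L^{b+\delta}$.

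The delicate piece is $Q_{2,L}$. Taylor-expanding $K_{ij}$ about $-z$ to the order built into $\bar P$ yields, for $|y|\le L$, a pointwise bound of the form $|Q_{2,L}(y,s)|\lesssim |y|\,L^{b-N-1}+L^{b-N}$ when $\delta<\tfrac12$, and $|Q_{2,L}(y,s)|\lesssim |y|^2L^{b-N-2}+|y|\,L^{b-N-1}+L^{b-N}$ when $\delta\in[\tfrac12,1[$ — the geometric series over the far shells converging precisely because $b\le N+2\delta<N+1$, respectively $<N+2$. Pairing these with $\int_{|y|\le L}|V|\lesssim L^{\frac{N+b}{2}}$ and using $b\le N+2\delta$ (so that $\tfrac{3b}{2}-\tfrac N2\le b+\delta$), the $|y|$-dependent terms contribute $\lesssim L^{b+\delta}$, while the bounded parts of $Q_{2,L}$ contribute $\lesssim L^{\frac{N+b}{2}}$ when $\delta<\tfrac12$ and $\lesssim L^{\frac{N+b}{2}+1}$ when $\delta\in[\tfrac12,1[$. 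Summing the four contributions gives \eqref{eq:estq2}; in the case $\delta\in[0,\tfrac12[$ with $A\equiv 0$ the $L^{\frac{N+b}{2}+1}$ summand is absent, leaving $L^{b+\delta}+L^{\frac{N+b}{2}}$, which is $L^{b+\delta}$ for $b\ge N-2\delta$ and $L^{\frac{N+b}{2}}$ for $b\le N-2\delta$, i.e. \eqref{eq:estq3}. The logarithmic factors at the borderline exponents $(b,\delta)=(N+1,\tfrac12)$, respectively $(N,0)$, come out automatically: these are precisely the places where the dyadic sums $\sum_{2^j\lesssim L}(2^j)^{b-N-1}$, respectively $\sum_{2^j\lesssim L}(2^j)^{b-N}$, have common ratio $1$ and hence contribute $[\log_2 L]$.

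I expect the main obstacle to be exactly the bookkeeping for $Q_{2,L}$: one must expand the singular kernel to the correct order (the two cases of $\bar P$ corresponding to $b$ possibly exceeding $N+1$), keep the $\delta$-decay of $V$ and the $b$-growth of its $L^2$-mass in play simultaneously — so that the budget $b\le N+2\delta$ is exactly what pins the far-field term down to $L^{b+\delta}$ rather than something larger — and correctly spot the borderline exponents where the series degenerate into logarithms. Everything else is a routine cascade of dyadic estimates of the same kind as those already used in the proof of Lemma~\ref{lem:VP}.
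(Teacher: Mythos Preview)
Your approach is essentially the paper's: split $Q$ into the local $c_0|V|^2$ term, the linear $A(s)\cdot y$ term, a near-field Calder\'on--Zygmund piece, and a far-field/correction remainder, then bound each against $|V|$ via dyadic decomposition and the two hypotheses in \eqref{eq:lemasum}; the paper merely keeps the far-field Taylor remainder (its $Q_{3,L}$, over $|z|\ge 2L$) separate from the inner correction (its $Q_{4,L}$, over $M\le|z|\le 2L$) rather than bundling them as you do. One slip to fix: your stated pointwise bound $|Q_{2,L}|\lesssim |y|\,L^{b-N-1}+L^{b-N}$ is only correct when $b>N$ --- for $b<N$ the mid-range sum $\sum_{M\lesssim 2^j\lesssim L}(2^j)^{b-N}$ is $O(1)$ (and $\sim\log L$ at $b=N$), not $L^{b-N}$, and it is precisely this $O(1)$ bound that yields the $L^{(N+b)/2}$ contribution you subsequently (and correctly) invoke.
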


\begin{proof}[Proof of Lemma \ref{lem:pres2}]

We decompose $Q(y,s)$ as
\begin{equation}\label{eq:Qy-dec}
  Q(y,s)= c_0 |V(y,s)|^2 + Q_{1,L}(y, s) + Q_{2,L}(y,s) + Q_{3,L} (y, s) + Q_{4,L}(y,s),
\end{equation}
where
\begin{equation*}
\begin{split}
  & Q_{1,L}(y, s)= A(s)\cdot y, \quad\quad Q_{2,L}(y, s) = \textrm{p.v.} \int_{|y|\leq 2L} K_{ij}(y-z) V_i(z,s) V_j(z,s) \,\mathrm{d}z, \\
  & Q_{3,L}(y, s)=
  \begin{cases}
    \int_{|z|\geq 2 L} \big(K_{ij}(y-z)-K_{ij}(z)\big) V_i(z,s) V_j(z,s)\,\mathrm{d}z, \quad &\textrm{if}\;\; \delta\in [0,\frac{1}{2}[\\
    \int_{|z|\geq 2 L} \big(K_{ij}(y-z)-K_{ij}(z)-y\cdot\nabla K_{ij}(z)\big) V_i(z,s) V_j(z,s)\,\mathrm{d}z, \quad &\textrm{if}\;\; \delta\in [\frac{1}{2},1[,
  \end{cases}
  \\
  & Q_{4,L}(y, s)=
  \begin{cases}
    -\int_{M\leq |z|\leq 2L} K_{ij}(z) V_i(z,s) V_j(z,s)\,\mathrm{d}z, \quad &\textrm{if}\;\; \delta\in[0,\frac{1}{2}[, \\
    -\int_{M\leq |z|\leq 2L} \big(K_{ij}(z) + y\cdot\nabla K_{ij}(z)\big) V_i(z,s) V_j(z,s)\,\mathrm{d}z, \quad &\textrm{if}\;\; \delta\in[\frac{1}{2},1[.
  \end{cases}
\end{split}
\end{equation*}
From \eqref{eq:lemasum}, we first directly have
$$\int_0^{S_0}\int_{|y|\leq L}|V(y,s)|^3\,\mathrm{d}y \mathrm{d}s \lesssim L^{b+\delta},$$
and
\begin{equation*}
\begin{split}
  \int_0^{S_0} \int_{|y|\leq L} |Q_{1,L}(y,s)| |V(y,s)|\,\mathrm{d}y \mathrm{d}s &\leq \Big(\sup_{s\in [0,S_0]}|A(s)|\Big) L^{N/2+1} \bigg(\int_0^{S_0}\int_{|y|\leq L} |V(y,s)|^2\,\mathrm{d}y\mathrm{d}s\bigg)^{1/2} \\
  & \lesssim  L^{\frac{N+b}{2}+ 1}.
\end{split}
\end{equation*}
For the term involving $Q_{2,L}(y,s)$,
by the H\"older inequality and Calder\'on-Zygmund theorem, we get
\begin{equation*}
\begin{split}
  \int_0^{S_0}\int_{|y|\leq L}|Q_{2,L}(y,s)| |V(y,s)|\,\mathrm{d}y\mathrm{d}s & \leq \Big(\int_0^{S_0}\int_{|y|\leq L}|Q_{2,L}(y,s)|^{\frac{3}{2}}\,\mathrm{d}y\mathrm{d}s\Big)^{\frac{2}{3}}
  \Big( \int_0^{S_0}\int_{|y|\leq L} |V(y,s)|^3\,\mathrm{d}y\mathrm{d}s\Big)^{\frac{1}{3}} \\
  & \lesssim \int_0^{S_0}\int_{|y|\leq 2L} |V(y,s)|^3\,\mathrm{d}y\mathrm{d}s \lesssim L^{b+\delta}.
\end{split}
\end{equation*}
For the term containing $Q_{3,L}(y,s)$, using the support property and the dyadic decomposition again, we infer that if $\delta\in [0,1/2[$,
\begin{equation*}
\begin{split}
  \int_0^{S_0}\int_{|y|\leq L} |Q_{3,L}(y,s)| |V(y,s)|\,\mathrm{d}y\mathrm{d}s & \lesssim L^{N+\delta} \int_0^{S_0}\Big(\sup_{|y|\leq L} |Q_{3,L}(y,s)|\Big)\,\mathrm{d}s \\
  & \lesssim L^{N+\delta} \sup_{|y|\leq L} \bigg(\sum_{k=1}^\infty \int_0^{S_0}\int_{2^k L\leq |z|\leq 2^{k+1}L}
  \frac{|y|}{|z|^{N+1}} |V(z,s)|^2\,\mathrm{d}z\mathrm{d}s\bigg) \\
  & \lesssim L^{N+\delta+1} \sum_{k=1}^\infty \frac{1}{(2^k L)^{N+1}} \int_0^{S_0}\int_{|z|\sim 2^k L} |V(z,s)|^2\,\mathrm{d}z\,\mathrm{d}s \\
  & \lesssim L^{N+\delta+1}\sum_{k=1}^\infty (2^k L)^{b-N-1}\lesssim L^{b+\delta},
\end{split}
\end{equation*}
and if $\delta\in [1/2,1[$,
\begin{equation*}
\begin{split}
  \int_0^{S_0}\int_{|y|\leq L} |Q_{3,L}(y,s)| |V(y,s)|\,\mathrm{d}y\mathrm{d}s & \lesssim L^{N+\delta} \int_0^{S_0}\Big(\sup_{|y|\leq L} |Q_{3,L}(y,s)|\Big)\,\mathrm{d}s \\
  & \lesssim L^{N+\delta} \sup_{|y|\leq L} \bigg(\sum_{k=1}^\infty \int_0^{S_0}\int_{2^k L\leq |z|\leq 2^{k+1}L}
  \frac{|y|^2}{|z|^{N+2}} |V(z,s)|^2\,\mathrm{d}z \mathrm{d}s\bigg) \\
  & \lesssim L^{N+\delta+2} \sum_{k=1}^\infty \frac{1}{(2^k L)^{N+2}} \int_0^{S_0}\int_{|z|\sim 2^k L} |V(z,s)|^2\,\mathrm{d}z\mathrm{d}s \\
  & \lesssim L^{N+\delta+2} \sum_{k=1}^\infty (2^k L)^{b-N-2}\lesssim L^{b+\delta}.
\end{split}
\end{equation*}
For the last term,  thanks to H\"older's inequality and the dyadic decomposition, we deduce that if $\delta\in [0,\frac{1}{2}[$,
\begin{equation*}
\begin{split}
  \int_0^{S_0}\int_{|y|\leq L} |Q_{4,L}(y,s)| |V(y,s)|\,\mathrm{d}y\mathrm{d}s & \lesssim L^{N/2} \Big(\int_0^{S_0}\int_{|y|\leq L}|V(y,s)|^2\,\mathrm{d}y\mathrm{d}s\Big)^{\frac{1}{2}}
  \Big(\sup_{s\in[0,S_0];|y|\leq L}|Q_{4,L}(y,s)|\Big) \\
  & \lesssim L^{\frac{N+b}{2}} \sup_{s\in[0,S_0]}\bigg(\sum_{k=-1}^{[\log_2 \frac{L}{M}]}\int_{\frac{L}{2^{k+1}}\leq |z|\leq \frac{L}{2^k}} \frac{1}{|z|^N} |V(z,s)|^2\,\mathrm{d}z\bigg) \\
  & \lesssim L^{\frac{N+b}{2}} \sum_{k=-1}^{[\log_2 \frac{L}{M}]} \Big(\frac{L}{2^k}\Big)^{-N+b} \lesssim
  \begin{cases}
    L^{\frac{3b-N}{2}},\quad &\textrm{if}\;\; b>N, \\
    L^N [\log_2 L],\quad &\textrm{if}\;\; b=N, \\
    L^{\frac{N+b}{2}},\quad &\textrm{if}\;\; b<N,
  \end{cases} \\
  & \lesssim
  \begin{cases}
    L^{b+\delta},\quad & \textrm{if}\;\; b\geq N,(b,\delta)\neq (N,0), \\
    L^N [\log_2 L],\quad & \textrm{if}\;\; (b,\delta)=(N,0), \\
    L^{\frac{N+b}{2}} ,\quad & \textrm{if}\;\; b<N, \\
  \end{cases}
\end{split}
\end{equation*}
and if $\delta\in[\frac{1}{2},1[$,
\begin{equation*}
\begin{split}
  & \int_0^{S_0}\int_{|y|\leq L} |Q_{4,L}(y,s)| |V(y,s)|\,\mathrm{d}y\mathrm{d}s \\
  \lesssim\, & L^{N/2} \Big(\int_0^{S_0}\int_{|y|\leq L}|V(y,s)|^2\,\mathrm{d}y\mathrm{d}s \Big)^{1/2} \Big(\sup_{s\in [0,S_0];|y|\leq L}|Q_{4,L}(y,s)|\Big) \\
  \lesssim\, & L^{\frac{N+b}{2}} \sup_{s\in[0,S_0]}\bigg(\sum_{k=-1}^{[\log_2 \frac{L}{M}]}\int_{\frac{L}{2^{k+1}}\leq |z|\leq \frac{L}{2^k}} \Big(\frac{1}{|z|^N} + \frac{L}{|z|^{N+1}}\Big) |V(z,s)|^2\,\mathrm{d}z\bigg) \\
  \lesssim\, & L^{\frac{N+b}{2}+1} \sum_{k=-1}^{[\log_2 \frac{L}{M}]} \Big(\frac{L}{2^k}\Big)^{-N-1+b} \lesssim
  \begin{cases}
    L^{\frac{3b-N}{2}},\quad &\textrm{if}\;\; b>N+1, \\
    L^{N+\frac{3}{2}} [\log_2 L],\quad &\textrm{if}\;\; b=N+1, \\
    L^{\frac{N+b}{2}+1},\quad &\textrm{if}\;\; b<N+1,
  \end{cases}
  \\ \lesssim\, &
  \begin{cases}
    L^{b+\delta},\quad & \textrm{if}\;\; b\geq N+1,\,(b,\delta)\neq (N+1,\frac{1}{2}), \\
    L^{N+\frac{3}{2}}[\log_2 L], \quad &\textrm{if}\;\; (b,\delta)= (N+1,\frac{1}{2}), \\
    L^{\frac{N+b}{2}+1},\quad & \textrm{if}\;\; b<N+1.
  \end{cases}
\end{split}
\end{equation*}
Therefore, collecting the above estimates leads to the desired estimates \eqref{eq:estq2} and \eqref{eq:estq3}.
\end{proof}

\textbf{Acknowledgements.}
The author was partially supported by NSFC grant 11401027 and a special fund from the Laboratory of Mathematics and Complex Systems, Ministry of Education.

\end{document}